\documentclass[11pt]{article}

\usepackage{amsmath,amssymb,amsfonts,mathtools,amsthm}
\usepackage{geometry}
\usepackage{float}
\usepackage[hidelinks]{hyperref}
\usepackage{enumitem} 
\usepackage{graphicx}
\usepackage{graphicx}
\newcommand{\bu}{\mathbf{u}} 
\newcommand{\ep}{\varepsilon} 
\theoremstyle{plain}
\newtheorem{theorem}{Theorem}
\newtheorem{lemma}{Lemma}

\newtheorem{corollary}{Corollary}

\theoremstyle{remark}
\newtheorem{remark}{Remark}

\newcommand{\R}{\mathbb{R}}
\newcommand{\bF}{\overline{F}}
\newcommand{\dd}{\,\mathrm{d}}

\title{Right-tail asymptotics for products of independent normal random variables}
\author{D\v{z}iugas Chvoinikov and Jonas \v{S}iaulys}
\date{}

\begin{document}
\maketitle

\begin{abstract}
Let $X_1,\dots,X_n$ be independent normal random variables with $X_i\sim N(\mu_i,\sigma_i^2)$, and set $Z=\prod_{i=1}^n X_i$.
We derive asymptotic approximations for the right tail probability $\mathbb{P}(Z>x)$ as $x\to\infty$.
When at least one mean is nonzero, the asymptotic formula remains explicit and involves a finite multiplicative factor arising from admissible sign patterns (reflecting the different ways the product can be positive); it includes an explicit first relative correction term of order $x^{-1/n}$, with remaining relative error $O(x^{-2/n})$.
The proof uses a boundary saddle-point/Laplace method: first a multidimensional Laplace approximation near the boundary saddle, then a one-dimensional endpoint Laplace approximation.
\end{abstract}

\noindent\textbf{Keywords:} products of normal random variables; right tail; asymptotic expansion; Laplace method; survival function.

\section{Introduction}

Products of random variables appear naturally in many settings. A simple example is compound growth:
if successive multiplicative factors are random (e.g.\ one-period returns), then the total factor is a product.
Products also arise in physics and engineering models where measured quantities are formed by multiplying
noisy components. Because of this, it is useful to understand the distribution and tail behaviour of products.

Even for normal random variables, the exact distribution of a product can be complicated.
For two independent standard normal variables $\xi$ and $\eta$, the density of $\xi\eta$ can be written using a modified Bessel function; see~\cite{Craig1936,WishartBartlett1932,Yuan1933}.
There are also exact formulas for products of correlated normal variables; see~\cite{Craig1936,CuiYuIommelliKong2016,Gaunt2019,NadarajahPogany2016}.
More recently, exact density formulas were also obtained in~\cite{Gaunt2021}.
For products of more than two independent normal variables, exact densities can be expressed in terms of
special functions (e.g.\ Meijer $G$-functions), see Springer and Thompson~\cite{SpringerThompson1966,SpringerThompson1970}.

Alongside exact distributional formulas, asymptotic approximations for tail probabilities are also of interest.
In the zero-mean case, Leipus, \v{S}iaulys, Dirma and Zov\'e~\cite{LeipusSiaulysDirmaZove2023}
derived a precise right-tail asymptotic for the product of $n$ independent zero-mean normal variables;
see also Arendarczyk and Debicki~\cite{ArendarczykDebicki2011} for a related Weibull-tail asymptotic result.

The present paper focuses on the right tail of the product
\[
Z=\prod_{i=1}^n X_i,\qquad X_i\sim N(\mu_i,\sigma_i^2)\ \text{independent},
\]
in the case where at least one mean $\mu_i$ is nonzero.
Our main result (Theorem~\ref{thm:munonzero}) gives an explicit asymptotic approximation for
$\mathbb{P}(Z>x)$ as $x\to\infty$, including the explicit first relative correction term of order $x^{-1/n}$, with remaining relative error $O(x^{-2/n})$.
The formula remains simple to evaluate: it reduces to a finite computation over admissible sign patterns,
summarized by the quantities $L_*$ and $m_*$, and Remark~\ref{rem:Lstar} provides an $O(n)$ procedure
to compute them.

The proof uses standard Laplace and saddle-point arguments. We first show that the main contribution comes from the balanced region, then perform a multidimensional Laplace approximation near a boundary saddle, and finally apply a one-dimensional endpoint Laplace approximation to the remaining integral. Our technical tools are taken from Wong's treatment of Laplace-type
integrals~\cite{Wong2001}.

\paragraph{Structure of the paper.}
Section~\ref{sec:tools_lemmas} states the Laplace-method tools used in the proof.
Section~\ref{sec:saddle} derives and analyzes the boundary saddle system.
The remaining sections compute the saddle expansion and prefactor and complete the proof of
Theorem~\ref{thm:munonzero}.

\section{Main results}

Let $X_i\sim N(\mu_i,\sigma_i^2)$ be independent, with $\sigma_i>0$, and define
\[
Z:=\prod_{i=1}^n X_i,
\qquad
\bF_n(x):=\mathbb{P}(Z> x),
\qquad x\to+\infty.
\]

\begin{theorem}\label{thm:munonzero}
Assume that at least one $\mu_i$ is nonzero. Define
\[
\mathcal S:=\Big\{s=(s_1,\dots,s_n)\in\{\pm1\}^n:\ \prod_{i=1}^n s_i=+1\Big\},
\qquad
L_s:=\sum_{i=1}^n s_i\frac{\mu_i}{\sigma_i},
\]
\[
L_*:=\max_{s\in\mathcal S}L_s,
\qquad
\mathcal S_*:=\{s\in\mathcal S:\ L_s=L_*\},
\qquad
m_*:=|\mathcal S_*|,
\qquad
C:=\exp\!\left(-\sum_{i=1}^n\frac{\mu_i^2}{2\sigma_i^2}\right).
\]
Then, for any $s\in\mathcal S_*$, as $x\to\infty$,
\begin{align*}
\bF_n(x)
=
\frac{C}{2^{n/2}\sqrt{\pi n}}
\left(\frac{\prod_{j=1}^n \sigma_j}{x}\right)^{1/n}
m_*\,
\exp\!\Bigg\{
-\frac{n}{2}\,\left(\frac{x}{\prod_{j=1}^n \sigma_j}\right)^{2/n}+L_*\left(\frac{x}{\prod_{j=1}^n \sigma_j}\right)^{1/n}
+\frac{1}{4}\left(
\sum_{i=1}^n\left(\frac{\mu_i}{\sigma_i}\right)^2
-\frac{1}{n}L_*^2
\right) \\
+\frac{1}{16}\left(\frac{\prod_{j=1}^n \sigma_j}{x}\right)^{1/n}
\left(
\sum_{i=1}^n s_i\left(\frac{\mu_i}{\sigma_i}\right)^3
+\frac{L_*}{n}\sum_{i=1}^n\left(\frac{\mu_i}{\sigma_i}\right)^2
-\frac{2}{n^2}L_*^3
\right)
\Bigg\}
\left(
1+\frac{n+3}{4n}\,L_*
\left(\frac{\prod_{j=1}^n \sigma_j}{x}\right)^{1/n}
+O(x^{-2/n})
\right).
\end{align*}
\end{theorem}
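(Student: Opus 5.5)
We first reduce to standardized variables. Write $X_i=\sigma_i Y_i$ with $Y_i\sim N(a_i,1)$, $a_i:=\mu_i/\sigma_i$, and set $t:=(x/\prod_j\sigma_j)^{1/n}$. Then $\bF_n(x)=\mathbb P(\prod_i Y_i>t^n)$. We decompose the event according to the sign pattern $s=(\operatorname{sgn}Y_i)_i$. Only $s\in\mathcal S$ can contribute, and on the orthant $\{s_iY_i>0\}$ we substitute $Y_i=s_i u_i$ with $u_i>0$. This gives
\[
\bF_n(x)=\sum_{s\in\mathcal S}\frac{C}{(2\pi)^{n/2}}\int_{\{u>0,\ \prod u_i>t^n\}}\exp\Big(-\tfrac12\sum_i u_i^2+\sum_i s_ia_iu_i\Big)\dd u .
\]
Each summand is a Laplace-type integral. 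The quadratic part $\frac12|u|^2$ is minimized on the hypersurface $\prod u_i=t^n$ at the symmetric point $u_i=t$, where it equals $nt^2/2$. The linear perturbation at that point contributes $e^{L_s t}$. Hence patterns with $L_s<L_*$ are exponentially smaller, by a factor $e^{-(L_*-L_s)t}$, and the sum reduces to $m_*$ times a single integral for any $s\in\mathcal S_*$. One must check that this integral depends on $s$ only through $L_s$ up to the stated order. The correction term contains $\sum s_ia_i^3$, so the statement implicitly asserts this for $s\in\mathcal S_*$ (or it is absorbed in the error). I would follow the paper's convention and fix one such $s$.

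Next, for fixed $s$, I would parametrize $u_i=r\,e^{v_i}$ with $\sum v_i=0$ (so $\prod u_i=r^n$). This splits the integral into a radial variable $r>t$ and $n-1$ transverse variables $v$. Away from a neighborhood $|v|\le\delta$, the exponent $-\frac12 r^2\sum e^{2v_i}$ is at least $-\frac{n}{2}r^2-c\,r^2\delta^2$. That region is negligible, which is the "balanced region" reduction; it is proved with crude Gaussian bounds. Inside, for each fixed $r$ we find the saddle $v^*(r)$ of
\[
\phi_r(v)=-\tfrac12 r^2\textstyle\sum e^{2v_i}+r\sum s_ia_ie^{v_i}.
\]
Its equations are $r e^{2v_i}-s_ia_ie^{v_i}=\lambda$ with $\sum v_i=0$, which we solve perturbatively in $1/r$: $v_i^*=\frac{s_ia_i-L_s/n}{2r}+O(r^{-2})$. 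The saddle value $\phi_r(v^*)$ is then expanded to order $r^{-1}$. This produces the constant $\frac14(\sum a_i^2-L_*^2/n)$ and the $r^{-1}$ cubic term $\frac1{16}(\sum s_ia_i^3+\dots)$. The Hessian on the hyperplane is $-2r^2(I-\frac1n\mathbf 1\mathbf 1^T)+O(r)$, and its determinant gives the prefactor $(\pi/r^2)^{(n-1)/2}n^{1/2}$ times $(1+O(1/r))$. Including the Jacobian $r^{n-1}\sqrt n$ of the change of variables, we apply Wong's multidimensional Laplace theorem, keeping the first correction term.

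This leaves the one-dimensional integral
\[
\int_t^\infty g(r)\,e^{-\frac n2 r^2+L_* r+c_0+c_1/r}\dd r,
\]
with $g(r)=\text{const}\cdot(1+b/r+O(r^{-2}))$. We handle it by the endpoint Laplace method at $r=t$. Substituting $r=t+w$, the exponent becomes linear in $w$ with slope $-(nt-L_*)$, plus the quadratic term $-\frac n2 w^2$. This gives the factor
\[
\frac{1}{nt-L_*}\Big(1-\frac{n}{(nt-L_*)^2}+\dots\Big)=\frac1{nt}\Big(1+\frac{L_*}{nt}+O(t^{-2})\Big).
\]
Combining the constants should produce $\frac{C}{2^{n/2}\sqrt{\pi n}}\,t^{-1}$. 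The relative corrections of order $t^{-1}$ come from three places: the Hessian correction, the density factor $g$, and the endpoint term. Summed, they should give $\frac{n+3}{4n}L_*t^{-1}$.

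The main obstacle is the bookkeeping of the $O(1/r)$ corrections in the multidimensional step. This requires the third and fourth derivatives of $\phi_r$ at $v^*$ and the first-order shift of the Hessian, as in Wong's higher-order Laplace formula. The computation must be arranged so that all contributions collapse to the single coefficient $\frac{n+3}{4n}L_*$. I would first carry this out for $n=2$ as a sanity check, using the Bessel-function representation of the product density. I would also confirm that the uniformity in $r\ge t$ needed to interchange the two Laplace steps holds. That uniformity follows since all error terms are $O(r^{-2})\le O(t^{-2})$.
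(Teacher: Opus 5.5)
\textbf{Verdict: genuine gap.} You assert, without computing it, that the saddle value produces the paper's $\frac1{16}(\cdots)$ term, and that coefficient is wrong. As written, the statement cannot be proved.

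\textbf{Route.} Your plan is the paper's two-step Laplace scheme in different coordinates. Your fibration $u_i=re^{v_i}$, $\sum_i v_i=0$, is the paper's $(u_1,\dots,u_{n-1},w)$ with $r=w^{1/n}$ after standardizing. One small advantage is that your fibre weight is exactly $\sqrt n\,r^{n-1}$.

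\textbf{Where it fails.} Every order-$x^{-1/n}$ term, which is the whole content of the theorem beyond leading order, is asserted ("this produces", "should give") rather than derived. Rescale $v=y/r$ and put $b_i=s_ia_i$, $L=L_s$. On $\sum_i y_i=0$ the exponent is
\[
-\tfrac n2 r^2+Lr-|y|^2+b\cdot y+\frac1r\Big(\tfrac12\sum_i b_iy_i^2-\tfrac23\sum_i y_i^3\Big)+O(r^{-2}).
\]
The fibre maximiser is $y=m+O(r^{-1})$ with $m_i=\tfrac12(b_i-L/n)$. The $r^{-1}$ coefficient of the saddle value is then
\[
\tfrac13\sum_i m_i^3+\tfrac{L}{2n}|m|^2=\tfrac1{24}\Big(\sum_i s_ia_i^3-\tfrac{L^3}{n^2}\Big),
\]
not $\frac1{16}\big(\sum_i s_ia_i^3+\frac Ln\sum_i a_i^2-\frac2{n^2}L^3\big)$. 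Your $\frac{n+3}{4n}L_*$ is correct: the Gaussian average of the $r^{-1}$ bracket around $m$ adds $\frac{n-1}{4n}L$, and the endpoint adds $\frac Ln$.

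\textbf{Concrete check.} Take $n=2$, $\mu=(1,0)$, $\sigma=(1,1)$, $x=t^2$. Integrating out $X_2$ exactly gives $\bF_2(x)=\mathbb E\,[1-\Phi(t^2/|X_1|)]$, with $\Phi$ the standard normal distribution function. Mills' ratio and the substitution $X_1=te^{y/t}$ turn this into
\[
\frac{e^{-1/2}}{2\pi}\,\frac{e^{-t^2+t}}{t}\int e^{-2y^2+y}\Big(1+\frac1t\big(2y+\tfrac{y^2}2\big)+O(t^{-2})\Big)\,dy .
\]
The weight has mean and variance $\frac14$, so $\bF_2(t^2)=\frac{e^{-1/2}}{2\sqrt{2\pi}\,t}e^{-t^2+t+1/8}\big(1+\frac{21}{32t}+O(t^{-2})\big)$. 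The theorem has the same leading term but $e^{1/(16t)}(1+\frac5{8t})=1+\frac{22}{32t}+O(t^{-2})$, so it is off at relative order $x^{-1/n}$.

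\textbf{Where the paper's error comes from.} The error is in the paper's expansion of $\Psi$ at the saddle. Its $\frac1{16}$ term is exactly what you get by inserting the saddle truncated after the $\varepsilon_i/r$ term into $\Psi$. That truncated point violates $\prod_i u_i=x$ at relative order $r^{-3}$. The omitted $O(r^{-2})$ coordinate corrections enter $\Psi$ at order $r^{-1}$, because $\partial\Psi/\partial u_i\asymp r$, and their relevant weighted sum is fixed by the constraint at that order.

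\textbf{What your plan proves if carried out correctly.} It gives the corrected formula, with $\frac1{24}\big(\prod_j\sigma_j/x\big)^{1/n}\big(\sum_i s_i(\mu_i/\sigma_i)^3-L_*^3/n^2\big)$ in the exponent and the same factor $1+\frac{n+3}{4n}L_*(\prod_j\sigma_j/x)^{1/n}+O(x^{-2/n})$.

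\textbf{Smaller points.}
\begin{itemize}
\item The dependence on $s$ through $\sum_i s_ia_i^3$ is of order $x^{-1/n}$, so it cannot be absorbed into the error. It is, however, constant on $\mathcal S_*$ by the classification in Remark~\ref{rem:Lstar}: maximizers differ only in signs where $a_i=0$, or in which index of minimal $|a_j|$ is flipped.
\item Restricted to the hyperplane in orthonormal coordinates, $2r^2(I-\frac1n\mathbf 1\mathbf 1^\top)$ is $2r^2I_{n-1}$, with no factor $n$. The only $\sqrt n$ comes from the Jacobian; as written you count it twice.
\item Off $|v|\le\delta$, the exponent is at most, not at least, $-\frac n2r^2-cr^2\delta^2$.
\end{itemize}
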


\begin{remark}[Computing $L_*$ and $m_*$]\label{rem:Lstar}
Let $a_i:=\mu_i/\sigma_i$ and recall $L_s=\sum_{i=1}^n s_i a_i$ with $s_i\in\{\pm1\}$ and $\prod_{i=1}^n s_i=+1$.
Let
\[
I_0:=\{i:\ a_i=0\},\qquad k:=|I_0|.
\]
For $i\notin I_0$ set the sign
\[
s_i^{(0)}:=\begin{cases}
+1,& a_i>0,\\
-1,& a_i<0,
\end{cases}
\]
so that $s_i^{(0)}a_i=|a_i|$ for all $i\notin I_0$. Let
\[
p_0:=\prod_{i\notin I_0} s_i^{(0)}\in\{\pm1\}.
\]

\begin{itemize}
\item \textbf{If $k\ge 1$:} fix the signs on the nonzero coordinates by setting
$s_i:=s_i^{(0)}$ for all $i\notin I_0$, so that $\sum_{i\notin I_0}s_i a_i=\sum_{i\notin I_0}|a_i|$.
Now choose the signs $\{s_i\}_{i\in I_0}$ so that
\[
\prod_{i\in I_0} s_i = p_0,
\]
which ensures the overall constraint
\[
\prod_{i=1}^n s_i=\Big(\prod_{i\notin I_0} s_i^{(0)}\Big)\Big(\prod_{i\in I_0} s_i\Big)=p_0\cdot p_0=+1.
\]
This does not change the objective since $a_i=0$ for $i\in I_0$. Hence
\[
L_*=\sum_{i=1}^n |a_i|,
\qquad
m_*=2^{k-1}.
\]
Choose $k-1$ signs in $I_0$ arbitrarily; the last one is then uniquely determined by $\prod_{i\in I_0}s_i=p_0$, so $m_*=2^{k-1}$.

\item \textbf{If $k=0$ and $p_0=+1$:} the pattern $s=s^{(0)}$ is feasible and yields
\[
L_*=\sum_{i=1}^n |a_i|,
\qquad
m_*=1.
\]

\item \textbf{If $k=0$ and $p_0=-1$:} feasibility forces flipping an odd number of signs; flipping more than one strictly decreases $L_s$ further, so the maximizers flip exactly one coordinate.
Flipping index $j$ reduces $L_s$ by $2|a_j|$, so one should flip an index attaining $\min_{1\le i\le n}|a_i|$. Thus
\[
L_*=\sum_{i=1}^n |a_i|-2\min_{1\le i\le n}|a_i|,
\qquad
m_*=\bigl|\{j:\ |a_j|=\min_i|a_i|\}\bigr|.
\]
\end{itemize}

The above computation runs in linear time in $n$.
\end{remark}

\section{Tools and lemmas}
\label{sec:tools_lemmas}

We will use two standard Laplace-method facts: a multidimensional Laplace approximation
and an endpoint (maximum at the boundary) Laplace approximation; see Wong~\cite{Wong2001}.

\begin{lemma}[Laplace approximation in $\R^n$]\label{lem:wong_multi}
Let $D\subset\R^n$ be a (possibly unbounded) domain and let
\[
J(\lambda):=\int_D g(x)\,e^{-\lambda f(x)}\,dx,
\]
where $\lambda$ is a large positive parameter. Assume $f$ and $g$ are smooth on $D$ and that:

\begin{enumerate}[label=(\roman*)]
\item $J(\lambda)$ converges absolutely for all $\lambda\ge \lambda_0$.
\item For every $\varepsilon>0$, $\rho(\varepsilon)>0$, where
\[
\rho(\varepsilon):=\inf\{\,f(x)-f(x_0): x\in D \text{ and } \|x-x_0\|\ge \varepsilon\,\}.
\]
Condition~(ii) implies that $f$ attains its minimum at, and only at, the point $x_0$.
If $x_0$ is an interior point of $D$, then $x_0$ is a critical point of $f$, i.e.\ $\nabla f(x_0)=0$.
\item The Hessian matrix at $x_0$,
\[
A=\Big(\frac{\partial^2 f}{\partial x_i\partial x_j}\Big)\Big|_{x=x_0},
\]
is positive definite.
\end{enumerate}
If $x_0$ is an interior point of $D$, then as $\lambda\to\infty$,
\[
J(\lambda)\sim \left(\frac{2\pi}{\lambda}\right)^{n/2} g(x_0)\,(\det A)^{-1/2}\,e^{-\lambda f(x_0)}.
\]
\end{lemma}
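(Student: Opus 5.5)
The plan is the classical splitting argument: cut $D$ into a small ball around the minimiser $x_0$ and its complement. We show the complement contributes a quantity exponentially smaller than $e^{-\lambda f(x_0)}\lambda^{-n/2}$. Near $x_0$ we replace $f$ by its quadratic Taylor polynomial and recognise a Gaussian integral. Throughout we assume $g(x_0)\neq 0$; otherwise the statement should be read as $J(\lambda)=o(\lambda^{-n/2}e^{-\lambda f(x_0)})$, and the same argument gives that. Without loss of generality take $f(x_0)=0$ by factoring out $e^{-\lambda f(x_0)}$.

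\textbf{Step 1 (choice of $\varepsilon$).} Since $x_0$ is interior, $\nabla f(x_0)=0$ and $f$ is smooth. Taylor's theorem gives
\[
f(x)=\tfrac12 (x-x_0)^{\top}A(x-x_0)+R(x),\qquad |R(x)|\le K\|x-x_0\|^3
\]
on some closed ball $\overline{B}(x_0,\varepsilon_0)\subset D$. Let $\alpha>0$ be the smallest eigenvalue of $A$. Choose $\varepsilon\le\varepsilon_0$ so small that $K\varepsilon\le \alpha/4$. Then on $B_\varepsilon:=B(x_0,\varepsilon)$ we have
\[
f(x)\ge \tfrac14 (x-x_0)^{\top}A(x-x_0)\ge \tfrac{\alpha}{4}\|x-x_0\|^2 .
\]
We also fix $\varepsilon$ so that $g$ is bounded on $\overline B_\varepsilon$.

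\textbf{Step 2 (outer region).} On $D\setminus B_\varepsilon$ we have $f\ge\rho(\varepsilon)>0$ by (ii). Hence for $\lambda\ge\lambda_0$,
\[
|g|e^{-\lambda f}=|g|e^{-\lambda_0 f}e^{-(\lambda-\lambda_0)f}\le |g|e^{-\lambda_0 f}e^{-(\lambda-\lambda_0)\rho(\varepsilon)}.
\]
By (i), integrating gives
\[
\Big|\int_{D\setminus B_\varepsilon} g\,e^{-\lambda f}\,dx\Big|\le e^{-(\lambda-\lambda_0)\rho(\varepsilon)}\int_D|g|e^{-\lambda_0 f}\,dx .
\]
This is $O(e^{-c\lambda})$, hence $o(\lambda^{-n/2})$.

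\textbf{Step 3 (inner region).} Substitute $x=x_0+y/\sqrt\lambda$, which gives
\[
\int_{B_\varepsilon} g\,e^{-\lambda f}\,dx=\lambda^{-n/2}\int_{\|y\|<\varepsilon\sqrt\lambda} g\big(x_0+y/\sqrt\lambda\big)\,e^{-\lambda f(x_0+y/\sqrt\lambda)}\,dy .
\]
Pointwise in $y$, the integrand tends to $g(x_0)e^{-\frac12 y^{\top}Ay}$, because $\lambda R(x_0+y/\sqrt\lambda)=O(\|y\|^3/\sqrt\lambda)\to0$. By Step 1 it is dominated by $\sup_{B_\varepsilon}|g|\cdot e^{-\frac14 y^{\top}Ay}$, which is integrable. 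Dominated convergence then yields the limit
\[
g(x_0)\int_{\R^n}e^{-\frac12 y^{\top}Ay}\,dy=g(x_0)(2\pi)^{n/2}(\det A)^{-1/2},
\]
by diagonalising $A$. Combining with Step 2 gives the asserted asymptotic.

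\textbf{Main obstacle.} The only delicate point is obtaining a $\lambda$-independent integrable majorant in Step 3. This is why $\varepsilon$ is chosen in Step 1 so that the cubic remainder is absorbed into half the quadratic form. Once that choice is made, the rest follows from dominated convergence and assumption (ii), which handles everything away from $x_0$ uniformly, including unbounded parts of $D$.
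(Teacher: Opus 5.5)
Your proof is correct. The choice $K\varepsilon\le\alpha/4$ gives the $\lambda$-independent majorant $\sup_{\overline B_\varepsilon}|g|\,e^{-\frac14 y^{\top}Ay}$. Assumption (ii), combined with absolute convergence at $\lambda_0$, makes the outer piece $O(e^{-\rho(\varepsilon)\lambda})$. Your caveat about $g(x_0)=0$ is the right reading of the statement.

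The paper gives no proof of this lemma; it quotes it from Wong. The part of Wong's theory it relies on (in the proof of Corollary~\ref{cor:wong_multi_rate} and in the discussion of $\kappa_s(w)$) is a normal-form route. One uses a Morse-lemma change of variables $x=h(y)$ with $f(h(y))=f(x_0)+\frac12\sum_j\nu_j y_j^2$, Taylor-expands $G(y)=g(h(y))\det h'(y)$, and integrates term by term against the Gaussian.

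That route demands more: a smooth local diffeomorphism and full smoothness of $f$ and $g$. In return it produces the whole series $\sum_k c_k\lambda^{-n/2-k}$, and hence the refinement $1+\kappa/\lambda+O(\lambda^{-2})$ that the paper needs later. Your localization-plus-dominated-convergence argument is more elementary. It needs only $f\in C^2$ near $x_0$ (the cubic remainder bound is just a convenience) and $g$ continuous at $x_0$. It delivers only the leading term, with no rate.

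What your version does buy is that every constant is explicit: $K$, $\alpha$, $\varepsilon$, $\rho(\varepsilon)$, $\sup_{\overline B_\varepsilon}|g|$ and $\int_D|g|e^{-\lambda_0 f}$. These are exactly the quantities that would have to be controlled uniformly when, as in the paper, the lemma is applied to $f=\Phi_w/r(w)$. That function itself changes with $\lambda=r(w)$, and its minimiser $\tilde{\bu}_s(w)$ escapes to infinity.
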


\begin{corollary}\label{cor:wong_multi_rate}
Under the assumptions of Lemma~\ref{lem:wong_multi}, assume in addition that $f$ and $g$ are infinitely differentiable. If $x_0$ is an interior point of $D$, then
\[
J(\lambda)=\left(\frac{2\pi}{\lambda}\right)^{n/2} g(x_0)\,(\det A)^{-1/2}\,e^{-\lambda f(x_0)}
\Big(1+\frac{\kappa}{\lambda}+O(\lambda^{-2})\Big),
\qquad \lambda\to\infty,
\]
where
\[
\kappa:=\frac{c_1}{c_0},
\]
and $c_0,c_1$ are the first two coefficients in Wong's asymptotic expansion
\[
J(\lambda)\sim e^{-\lambda f(x_0)}\sum_{k=0}^\infty c_k\,\lambda^{-n/2-k}.
\]
\end{corollary}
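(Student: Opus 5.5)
Since the statement immediately above is Corollary~\ref{cor:wong_multi_rate}, the plan is to read it off from Wong's full asymptotic expansion for multidimensional Laplace integrals with an interior nondegenerate minimum. Under the hypotheses of Lemma~\ref{lem:wong_multi}, and with $f,g\in C^\infty$, Wong's theorem gives
\[
J(\lambda)\sim e^{-\lambda f(x_0)}\sum_{k=0}^\infty c_k\,\lambda^{-n/2-k},
\]
where the coefficients $c_k$ depend only on finitely many derivatives of $f$ and $g$ at $x_0$. This is an asymptotic expansion in the Poincaré sense. Truncating after two terms, the remainder is $O(\lambda^{-n/2-2})e^{-\lambda f(x_0)}$.

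The steps are as follows.
\begin{enumerate}[label=(\arabic*)]
\item Identify the leading coefficient, either from the lemma itself (the leading term of the expansion must match the $\sim$ statement) or by the direct computation, giving $c_0=(2\pi)^{n/2}g(x_0)(\det A)^{-1/2}$.
\item Factor out $c_0\lambda^{-n/2}e^{-\lambda f(x_0)}$, which yields $1+(c_1/c_0)\lambda^{-1}+O(\lambda^{-2})$.
\item This factorization is legitimate only if $c_0\neq 0$, i.e.\ $g(x_0)\neq 0$. I would add this as an implicit hypothesis; in the application $g$ is positive at the saddle. If $g(x_0)=0$, the statement should be read with $c_0\kappa$ replaced by $c_1$.
\end{enumerate}

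For self-containedness, I would also sketch where the expansion comes from. Condition (ii) lets one discard the complement of an $\varepsilon$-ball around $x_0$ at cost $O(e^{-\lambda(f(x_0)+\rho(\varepsilon)/2)})$, using absolute convergence (i) for $\lambda\ge\lambda_0$. This cost is exponentially small relative to the main term. Inside the ball, the Morse lemma gives coordinates $y$ with $f=f(x_0)+\tfrac12|y|^2$. Expanding the transformed amplitude $g(x(y))\,|\det \partial x/\partial y|$ in a Taylor polynomial of degree $3$ with remainder, odd moments vanish and the remainder contributes $O(\lambda^{-n/2-2})$.

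The main obstacle is purely bookkeeping: confirming that the remainder after the $c_1$ term is genuinely $O(\lambda^{-2})$ relative to the leading term. This requires Taylor control of the amplitude to fourth order near $x_0$ and the uniform exponential bound away from $x_0$. Both are supplied by smoothness and by hypotheses (i)–(ii).
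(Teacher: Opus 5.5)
Your proposal is correct and follows the paper's proof essentially verbatim: invoke Wong's Theorem~3 expansion, truncate after the $c_1$ term with an $O(\lambda^{-n/2-2})e^{-\lambda f(x_0)}$ remainder, identify $c_0=(2\pi)^{n/2}g(x_0)(\det A)^{-1/2}$, and factor it out. Your caveat that this factoring needs $g(x_0)\neq 0$ is a legitimate hypothesis the paper leaves implicit; it holds in the application, where $g=1/|u_1\cdots u_{n-1}|>0$ at the saddle.
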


\begin{proof}
Wong's Theorem~3 gives an asymptotic expansion
\[
J(\lambda)\sim e^{-\lambda f(x_0)}\sum_{k=0}^\infty c_k\,\lambda^{-n/2-k}.
\]
Truncating after $k=1$ yields
\[
J(\lambda)
=
e^{-\lambda f(x_0)}
\left(c_0\lambda^{-n/2}+c_1\lambda^{-n/2-1}+O(\lambda^{-n/2-2})\right).
\]
Factoring out the leading term,
\[
J(\lambda)
=
c_0\lambda^{-n/2}e^{-\lambda f(x_0)}
\left(1+\frac{c_1}{c_0}\lambda^{-1}+O(\lambda^{-2})\right).
\]
Moreover,
\[
c_0=(2\pi)^{n/2}g(x_0)(\det A)^{-1/2},
\]
so the stated formula follows with $\kappa=c_1/c_0$.
\end{proof}
The corresponding quantity $\kappa$ will be derived later.

\noindent\textbf{Application to our inner integral.}
For each sign region $s$, we apply Corollary~\ref{cor:wong_multi_rate} with $n$ replaced by $n-1$ and
\[
D=D_s,\qquad x=\tilde{\bu},\qquad \lambda=r(w),\qquad
f(\tilde{\bu})=\frac{1}{r(w)}\Phi_w(\tilde{\bu}),\qquad
g(\tilde{\bu})=\frac{1}{|u_1\cdots u_{n-1}|}.
\]
Then $x_0=\tilde{\bu}_s(w)$, $f(x_0)=S_s(w)/r(w)$, and
\[
\det A = r(w)^{-(n-1)}\det H_s(w),
\]
so Corollary~\ref{cor:wong_multi_rate} yields, as $w\to\infty$,
\[
\int_{D_s}\exp\!\big(-\Phi_w(\tilde{\bu})\big)\,\frac{d\tilde{\bu}}{|u_1\cdots u_{n-1}|}
=
(2\pi)^{\frac{n-1}{2}}\,
\frac{e^{-S_s(w)}}{|u_{1,s}(w)\cdots u_{n-1,s}(w)|\,\sqrt{\det H_s(w)}}
\Big(1+\frac{\kappa_s(w)}{r(w)}+O(r(w)^{-2})\Big),
\]
where $\kappa_s(w)$ denotes the first correction coefficient coming from the multidimensional Laplace expansion in this application.

\noindent\textbf{Verification of the assumptions of Lemma~\ref{lem:wong_multi}.}
From step 1 and 2 in prefactor section. Fix an admissible sign region $s$ and $w$ large. Recall that $\Phi_w$ is smooth on each sign region $D_s$
and that $f(\tilde{\bu})=\Phi_w(\tilde{\bu})/r(w)$ with $r(w)>0$.

\emph{(i) assumption.}
Write $u_n=w/(u_1\cdots u_{n-1})$ and recall
\[
\Phi_w(u_1,\dots,u_{n-1})
=
\sum_{k=1}^{n-1}\left(\frac{u_k^2}{2\sigma_k^2}-\frac{\mu_k}{\sigma_k^2}u_k\right)
+\left(\frac{u_n^2}{2\sigma_n^2}-\frac{\mu_n}{\sigma_n^2}u_n\right).
\]
Using $a^2/2-ab\ge a^2/4-b^2$ (with $a=u_k/\sigma_k$, $b=\mu_k/\sigma_k$) gives
\[
\frac{u_k^2}{2\sigma_k^2}-\frac{\mu_k}{\sigma_k^2}u_k \;\ge\;
\frac{u_k^2}{4\sigma_k^2}-\frac{\mu_k^2}{\sigma_k^2},
\qquad
\frac{u_n^2}{2\sigma_n^2}-\frac{\mu_n}{\sigma_n^2}u_n \;\ge\;
\frac{u_n^2}{4\sigma_n^2}-\frac{\mu_n^2}{\sigma_n^2}.
\]
Hence there exist constants $c_0,C_0>0$ (independent of $\tilde{\bu}$) such that
\[
\Phi_w(\tilde{\bu})\ge c_0\Big(\sum_{k=1}^{n-1}u_k^2 + u_n^2\Big)-C_0
\quad\Longrightarrow\quad
e^{-\Phi_w(\tilde{\bu})}\le C\,\exp\!\Big(-c_0\sum_{k=1}^{n-1}u_k^2\Big)\,
\exp\!\Big(-c_0\,\frac{w^2}{(u_1\cdots u_{n-1})^2}\Big),
\]
for some $C>0$.

Split $D_s=E_1\cup E_2$, where
$E_1:=\{\tilde{\bu}\in D_s:\ |u_k|\ge 1\ \forall k\}$ and $E_2:=D_s\setminus E_1$.
On $E_1$ we have $|u_1\cdots u_{n-1}|\ge 1$, hence
\[
\frac{e^{-\Phi_w(\tilde{\bu})}}{|u_1\cdots u_{n-1}|}
\le C\,\exp\!\Big(-c_0\sum_{k=1}^{n-1}u_k^2\Big),
\]
which is integrable on $\R^{n-1}$.

On $E_2$ we have $z:=|u_1\cdots u_{n-1}|\le 1$. Recall that from the lower bound on $\Phi_w$ we obtained
\[
e^{-\Phi_w(\tilde{\bu})}
\le
C\,\exp\!\Big(-c_0\sum_{k=1}^{n-1}u_k^2\Big)\,
\exp\!\Big(-c_0\,\frac{w^2}{(u_1\cdots u_{n-1})^2}\Big),
\]
hence
\[
\frac{e^{-\Phi_w(\tilde{\bu})}}{|u_1\cdots u_{n-1}|}
\le
C\,\exp\!\Big(-c_0\sum_{k=1}^{n-1}u_k^2\Big)\,
\frac{1}{z}\exp\!\Big(-c_0\,\frac{w^2}{z^2}\Big).
\]
Let $A:=c_0w^2>0$. Using $e^{x}\ge x$ for $x>0$, we have $e^{-x}\le 1/x$, and therefore
\[
\exp\!\Big(-\frac{A}{z^2}\Big)\le \frac{z^2}{A},\qquad 0<z\le 1.
\]
Consequently,
\[
\frac{1}{z}\exp\!\Big(-\frac{A}{z^2}\Big)
\le
\frac{1}{z}\cdot \frac{z^2}{A}
=
\frac{z}{A}
\le
\frac{1}{A}
\qquad(0<z\le 1).
\]
Thus on $E_2$,
\[
\frac{e^{-\Phi_w(\tilde{\bu})}}{|u_1\cdots u_{n-1}|}
\le
\frac{C}{A}\,\exp\!\Big(-c_0\sum_{k=1}^{n-1}u_k^2\Big).
\]
The right-hand side is integrable on $\R^{n-1}$, hence in particular integrable on $E_2$.
Together with the estimate on $E_1$, this yields
\[
\int_{D_s}\exp\!\big(-\Phi_w(\tilde{\bu})\big)\,\frac{d\tilde{\bu}}{|u_1\cdots u_{n-1}|}<\infty.
\]

\emph{(ii) assumption.} From (i) (dividing by $r(w)$) we have
\[
f(\tilde{\bu})\ge \frac{c_0}{r(w)}\Big(\sum_{k=1}^{n-1}u_k^2+\frac{w^2}{(u_1\cdots u_{n-1})^2}\Big)-\frac{C_0}{r(w)}.
\]
In particular, $f(\tilde u)\to\infty$ if either $\max_k|u_k|\to\infty$ or $|u_1\cdots u_{n-1}|\to0$, so for every $M>0$
the set $\{\tilde u\in D_s:\ f(\tilde u)\le M\}$ is compact. Since $f$ is continuous on $D_s$, it attains its minimum
on $D_s$ at some point $x_0\in D_s$.
Since $x_0$ is an interior minimizer and $f$ is $C^1$ on $D_s$, we have $\nabla f(x_0)=0$.
By the saddle-point analysis (Section~\ref{sec:saddle}), $f(\tilde{\bu})$ has a unique minimizer in $D_s$,
namely $\tilde u_s(w)$; therefore $x_0=\tilde u_s(w)$ and the minimizer is unique.

Fix $\ep>0$ and set $A_\ep:=\{\tilde{\bu}\in D_s:\ \|\tilde{\bu}-x_0\|\ge\ep\}$.
By the bound above, there exist $R,\delta>0$ such that
\[
\inf_{A_\ep} f \;=\; \inf_{A_\ep\cap K_{R,\delta}} f,
\qquad
K_{R,\delta}:=\Big\{\tilde{\bu}\in D_s:\ \max_k|u_k|\le R,\ |u_1\cdots u_{n-1}|\ge\delta\Big\}.
\]
Since $A_\ep\cap K_{R,\delta}$ is closed and bounded, and $f$ is continuous, $f$ attains its minimum on
$A_\ep\cap K_{R,\delta}$; denote this minimum by $m_\ep$.
Uniqueness of the minimizer implies $m_\ep>f(x_0)$, and therefore
\[
\rho(\ep):=\inf\{\,f(\tilde{\bu})-f(x_0):\tilde{\bu}\in D_s,\ \|\tilde{\bu}-x_0\|\ge\ep\,\}
=m_\ep-f(x_0)>0.
\]

\emph{(iii) assumption.}
Let $H_s(w)=\nabla^2\Phi_w(\tilde{\bu}_s(w))$ in the variables $(u_1,\dots,u_{n-1})$. From Prefactor Step~2(b),
\[
(H_s(w))_{ii}=\frac{4}{\sigma_i^2}\Big(1+O(r(w)^{-1})\Big),\qquad
(H_s(w))_{ij}=\frac{2s_is_j}{\sigma_i\sigma_j}\Big(1+O(r(w)^{-1})\Big)\quad(i\neq j).
\]
First ignore the $O(r(w)^{-1})$ terms and denote by $H_0$ the leading matrix with
$(H_0)_{ii}=4/\sigma_i^2$ and $(H_0)_{ij}=2s_is_j/(\sigma_i\sigma_j)$.
For any $x=(x_1,\dots,x_{n-1})\neq 0$, set
\[
y_i:=\frac{s_i x_i}{\sigma_i}\qquad (1\le i\le n-1),
\]
so $y\neq 0$. Expanding $x^\top H_0 x$ gives,
\[
x^\top H_0 x=\sum_{i=1}^{n-1}\frac{4}{\sigma_i^2}x_i^2
+2\sum_{i<j}\frac{2s_is_j}{\sigma_i\sigma_j}x_ix_j
=4\sum_{i=1}^{n-1}y_i^2+4\sum_{i<j}y_i y_j.
\]
By the identity $\big(\sum_i y_i\big)^2=\sum_i y_i^2+2\sum_{i<j}y_i y_j$, we get
\[
4\sum_i y_i^2+4\sum_{i<j}y_i y_j
=2\sum_i y_i^2+2\Big(\sum_i y_i\Big)^2>0,
\]
so $x^\top H_0 x>0$ for all $x\neq 0$, i.e.\ $H_0$ is positive definite. Since
$H_s(w)=H_0+O(r(w)^{-1})$ entrywise, the error is small for $w$ large and the inequality
$x^\top H_s(w)x>0$ remains true; hence $H_s(w)$ is positive definite for $w$ large.
Finally, since $f=\Phi_w/r(w)$ with $r(w)>0$,
\[
\nabla^2 f(x_0)=\frac{1}{r(w)}\,H_s(w)
\]
is positive definite as well.

\begin{lemma}[Endpoint Laplace expansion at a boundary minimum]\label{lem:wong_thm1}
Let
\[
I(\lambda)=\int_a^b \varphi(x)\,e^{-\lambda h(x)}\,dx,
\qquad \lambda\to\infty,
\]
where $a<b\le\infty$ and $\lambda>0$. Assume:
\begin{enumerate}[label=(\roman*)]
\item $h(x)>h(a)$ for all $x\in(a,b)$ and, for every $\delta>0$,
\[
\inf_{x\in[a+\delta,b)}\bigl(h(x)-h(a)\bigr)>0.
\]
\item $h'(x)$ and $\varphi(x)$ are continuous in a neighborhood of $x=a$ (allowing a possible exception at $x=a$).
\item As $x\to a^+$, there exist constants $\mu>0$ and $\alpha$ with $\Re(\alpha)>0$, and coefficients
$\{a_s\}_{s\ge 0}$, $\{b_s\}_{s\ge 0}$ with $a_0\neq 0$ and $b_0\neq 0$, such that
\[
h(x)\sim h(a)+\sum_{s=0}^\infty a_s (x-a)^{s+\mu},
\qquad
\varphi(x)\sim \sum_{s=0}^\infty b_s (x-a)^{s+\alpha-1},
\]
and the expansion for $h(x)$ may be differentiated termwise, giving
\[
h'(x)\sim \sum_{s=0}^\infty a_s(s+\mu)\,(x-a)^{s+\mu-1}.
\]
\item $I(\lambda)$ converges absolutely for all sufficiently large $\lambda$.
\end{enumerate}
Then there exist coefficients $\{c_s\}_{s\ge 0}$ (expressible in terms of $\{a_s\}$ and $\{b_s\}$) such that
\[
I(\lambda)\sim e^{-\lambda h(a)}\sum_{s=0}^\infty
\Gamma\!\Big(\frac{s+\alpha}{\mu}\Big)\,\frac{c_s}{\lambda^{(s+\alpha)/\mu}},
\qquad \lambda\to\infty,
\]
and in particular the first two coefficients are
\[
c_0=\frac{b_0}{\mu\,a_0^{\alpha/\mu}},
\qquad
c_1=
\left(
\frac{b_1}{\mu}
-\frac{(\alpha+1)a_1b_0}{\mu^2 a_0}
\right)
\frac{1}{a_0^{(\alpha+1)/\mu}}.
\]
\end{lemma}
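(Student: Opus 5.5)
We reduce to Watson's lemma by changing variables to $t=h(x)-h(a)$ near the endpoint. Without loss of generality $a=0$ and $h(a)=0$; the general case follows by translating $x$ and factoring out $e^{-\lambda h(a)}$. Since $h(x)>0$ for $x>0$ small and $h(x)\sim a_0x^\mu$, we must have $a_0>0$. Fix a small $\delta>0$ and split $I(\lambda)=\int_0^\delta+\int_\delta^b$. By assumption (i), $\eta:=\inf_{[\delta,b)}h>0$. For $\lambda\ge 2\lambda_1$, with $\lambda_1$ from (iv), we have $\lambda h\ge \lambda_1 h+(\lambda/2)\eta$ on $[\delta,b)$. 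Hence
\[
\Big|\int_\delta^b\varphi e^{-\lambda h}\,dx\Big|\le e^{-\lambda\eta/2}\int_\delta^b|\varphi|e^{-\lambda_1h}\,dx ,
\]
which is $O(e^{-\lambda\eta/2})$ and therefore smaller than every power $\lambda^{-(s+\alpha)/\mu}$. So only $\int_0^\delta$ matters.

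On $(0,\delta]$ the termwise-differentiable expansion in (iii) gives $h'(x)=a_0\mu x^{\mu-1}(1+o(1))>0$ for $\delta$ small, so $h$ is strictly increasing there. We substitute $t=h(x)$ with $x=x(t)$ on $(0,h(\delta)]$, which gives
\[
\int_0^\delta\varphi e^{-\lambda h}\,dx=\int_0^{h(\delta)}\psi(t)e^{-\lambda t}\,dt,
\qquad \psi(t):=\frac{\varphi(x(t))}{h'(x(t))}.
\]
The key step is to show that $\psi$ has an asymptotic expansion $\psi(t)\sim\sum_{s\ge0}c_s t^{(s+\alpha)/\mu-1}$ as $t\to0^+$. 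This is done by inverting the series: writing $h(x)=a_0x^\mu(1+\frac{a_1}{a_0}x+\dots)$ and taking the $\mu$-th root gives
\[
x=(t/a_0)^{1/\mu}\Big(1-\tfrac{a_1}{\mu a_0}(t/a_0)^{1/\mu}+\cdots\Big).
\]
Then
\[
\frac{\varphi}{h'}=\frac{b_0}{\mu a_0}\,x^{\alpha-\mu}\Big(1+\Big(\tfrac{b_1}{b_0}-\tfrac{(\mu+1)a_1}{\mu a_0}\Big)x+\cdots\Big),
\]
and inserting the inverted series, with $x^{\alpha-\mu}=(t/a_0)^{(\alpha-\mu)/\mu}\big(1-\tfrac{(\alpha-\mu)a_1}{\mu a_0}(t/a_0)^{1/\mu}+\cdots\big)$, yields
\[
c_0=\frac{b_0}{\mu a_0^{\alpha/\mu}},\qquad
c_1=\frac{b_0}{\mu a_0^{(\alpha+1)/\mu}}\Big(\frac{b_1}{b_0}-\frac{(\alpha+1)a_1}{\mu a_0}\Big).
\]
This agrees with the stated formula. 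Higher $c_s$ arise in the same way as polynomials in the $a_j,b_j$.

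Finally we apply Watson's lemma to $\int_0^{h(\delta)}\psi(t)e^{-\lambda t}\,dt$. Truncate the expansion after $N$ terms, so that $|\psi(t)-\sum_{s<N}c_st^{(s+\alpha)/\mu-1}|\le K t^{(N+\Re\alpha)/\mu-1}$ on $(0,h(\delta)]$. Each retained term is integrated over $(0,\infty)$, giving $\Gamma((s+\alpha)/\mu)\,c_s\lambda^{-(s+\alpha)/\mu}$; replacing $(0,h(\delta)]$ by $(0,\infty)$ costs only an exponentially small error. The remainder contributes $O(\lambda^{-(N+\Re\alpha)/\mu})$ because $\int_0^\infty t^{\beta-1}e^{-\lambda t}\,dt=\Gamma(\beta)\lambda^{-\beta}$.

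The main obstacle is the middle step. The expansions in (iii) are only asymptotic, not convergent, so the series inversion and the expansion of $\psi$ must be justified with explicit remainder bounds rather than as formal power-series manipulations. I would handle this by working with finite truncations plus $O(x^{N+\mu})$ remainders throughout. The termwise differentiability of $h$ is what gives the remainder for $h'$ and hence for $1/h'(x(t))$; the continuity assumptions in (ii) guarantee that the change of variables is legitimate.
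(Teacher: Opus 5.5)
Your proof is correct, and your values of $c_0$ and $c_1$ are right. The paper gives no argument of its own beyond citing Wong's Theorem~1. What you wrote is essentially Wong's standard proof of that theorem: discard $[a+\delta,b)$ using (i) and (iv), substitute $t=h(x)-h(a)$, revert the series to expand $\varphi/h'$ in powers $t^{(s+\alpha)/\mu-1}$, and apply Watson's lemma. You correctly identify the remainder-level justification of the series reversion as the only step needing care.
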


\begin{proof}
This is a restatement of Wong's Theorem~1 with the same hypotheses and notation.
\end{proof}

\begin{corollary}[Endpoint rule for $\mu=\alpha=1$]
\label{cor:wong_mu1_alpha1_rate}
Under the assumptions of Lemma~\ref{lem:wong_thm1}, if $\mu=\alpha=1$, then
\[
I(\lambda)= \frac{\varphi(a)}{\lambda h'(a)}e^{-\lambda h(a)}
\Bigl(1+\frac{\eta}{\lambda}+O(\lambda^{-2})\Bigr),
\qquad \lambda\to\infty,
\]
where
\[
\eta:=\frac{c_1}{c_0},
\]
and $c_0,c_1$ are the first two coefficients in the asymptotic expansion from
Lemma~\ref{lem:wong_thm1}.
\end{corollary}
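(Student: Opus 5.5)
The statement to prove is Corollary~\ref{cor:wong_mu1_alpha1_rate}. The plan is to specialize Lemma~\ref{lem:wong_thm1} to $\mu=\alpha=1$ and truncate the resulting expansion after two terms, exactly as in the proof of Corollary~\ref{cor:wong_multi_rate}.

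\textbf{Specializing the expansion.} With $\mu=\alpha=1$, Lemma~\ref{lem:wong_thm1} gives
\[
I(\lambda)\sim e^{-\lambda h(a)}\sum_{s\ge 0}\Gamma(s+1)\,c_s\,\lambda^{-(s+1)}.
\]
Truncating after $s=1$ yields
\[
I(\lambda)=e^{-\lambda h(a)}\bigl(c_0\lambda^{-1}+c_1\lambda^{-2}+O(\lambda^{-3})\bigr),
\]
using $\Gamma(1)=\Gamma(2)=1$. This is legitimate because an asymptotic series, by definition, controls the remainder after any finite truncation by the order of the next term.

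\textbf{Identifying the leading coefficient.} The hypotheses in (iii) with $\mu=\alpha=1$ read
\[
h(x)=h(a)+a_0(x-a)+a_1(x-a)^2+\cdots,\qquad \varphi(x)=b_0+b_1(x-a)+\cdots.
\]
Hence $b_0=\varphi(a)$, understood as the limit $\varphi(a^+)$ if $\varphi$ is only defined there by continuity. Since the expansion of $h$ may be differentiated termwise, $a_0=h'(a)$ (again a one-sided limit). The formula for $c_0$ then gives
\[
c_0=\frac{b_0}{a_0}=\frac{\varphi(a)}{h'(a)}.
\]
Here $a_0\neq0$ by assumption, and $a_0>0$ follows from (i). For completeness one may also note
\[
c_1=\frac{b_1-2a_1b_0/a_0}{a_0^2},
\qquad\text{so}\qquad
\eta=\frac{c_1}{c_0}=\frac{b_1}{b_0a_0}-\frac{2a_1}{a_0^2}.
\]
This explicit value is what will be used later.

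\textbf{Conclusion.} Because $b_0\neq0$ we have $c_0\neq0$, so we may factor out $c_0\lambda^{-1}e^{-\lambda h(a)}$:
\[
I(\lambda)=\frac{\varphi(a)}{\lambda h'(a)}e^{-\lambda h(a)}\Bigl(1+\frac{c_1}{c_0}\lambda^{-1}+O(\lambda^{-2})\Bigr).
\]
Dividing the remainder $O(\lambda^{-3})$ by $c_0\lambda^{-1}$ gives the relative error $O(\lambda^{-2})$. This is the claimed formula with $\eta=c_1/c_0$.

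The only point needing care is the identification $b_0=\varphi(a)$, $a_0=h'(a)$ when $\varphi$ or $h'$ is exceptional at $a$. I will handle this by interpreting these values as one-sided limits, which exist by (ii)–(iii). Apart from that the argument is pure bookkeeping; I do not expect any step to be a genuine obstacle.
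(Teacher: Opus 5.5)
Your proposal is correct and follows the paper's proof essentially line by line. Like the paper, you specialize Lemma~\ref{lem:wong_thm1} to $\mu=\alpha=1$, truncate after two terms using $\Gamma(1)=\Gamma(2)=1$, factor out $c_0\lambda^{-1}e^{-\lambda h(a)}$ with $c_0=b_0/a_0=\varphi(a)/h'(a)$, and read off $\eta=b_1/(a_0b_0)-2a_1/a_0^2$. Leaving $\eta$ in terms of $a_1,b_1$, rather than writing $a_1=h''(a)/2$ and $b_1=\varphi'(a)$ as the paper does, is slightly more careful, since it does not assume extra smoothness of $h$ and $\varphi$ at the endpoint.
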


\begin{proof}
With $\mu=\alpha=1$, Lemma~\ref{lem:wong_thm1} gives
\[
I(\lambda)\sim e^{-\lambda h(a)}\sum_{s=0}^\infty \Gamma(s+1)c_s\,\lambda^{-(s+1)}.
\]
Taking the first three terms,
\[
I(\lambda)
= e^{-\lambda h(a)}
\left(
\frac{c_0}{\lambda}
+\frac{c_1}{\lambda^2}
+O(\lambda^{-3})
\right).
\]
Factoring out the leading term gives
\[
I(\lambda)
=
\frac{c_0}{\lambda}e^{-\lambda h(a)}
\Bigl(1+\frac{c_1}{c_0}\lambda^{-1}+O(\lambda^{-2})\Bigr).
\]

Moreover, for $\mu=\alpha=1$, the expansions in Lemma~\ref{lem:wong_thm1} become
\[
h(x)\sim h(a)+a_0(x-a)+a_1(x-a)^2+\cdots,
\qquad
\varphi(x)\sim b_0+b_1(x-a)+\cdots.
\]
Thus
\[
a_0=h'(a),\qquad a_1=\frac{h''(a)}{2},
\qquad
b_0=\varphi(a),\qquad b_1=\varphi'(a).
\]
Hence
\[
c_0=\frac{b_0}{a_0}=\frac{\varphi(a)}{h'(a)}.
\]
Also, Wong's formula for $c_1$ yields
\[
c_1=\frac{b_1}{a_0^2}-\frac{2a_1b_0}{a_0^3}
=\frac{\varphi'(a)}{h'(a)^2}-\frac{\varphi(a)h''(a)}{h'(a)^3}.
\]
Therefore
\[
I(\lambda)= \frac{\varphi(a)}{\lambda h'(a)}e^{-\lambda h(a)}
\Bigl(1+\frac{\eta}{\lambda}+O(\lambda^{-2})\Bigr),
\]
where
\[
\eta=\frac{c_1}{c_0}
=\frac{\varphi'(a)}{\varphi(a)h'(a)}-\frac{h''(a)}{h'(a)^2}.
\]
\end{proof}

\noindent\textbf{Application.}
Fix $s$ and write $w=xt$. Then
\[
\int_x^\infty A_s(w)e^{-S_s(w)}\,dw
= x\int_1^\infty A_s(xt)\,e^{-S_s(xt)}\,dt.
\]
Set $\lambda:=r(x)^2$ and define
\[
h_x(t):=\frac{S_s(xt)}{\lambda},\qquad \varphi_x(t):=xA_s(xt).
\]
Then $\lambda h_x(1)=S_s(x)$ and $\lambda h_x'(1)=xS_s'(x)$. Assuming the hypotheses of
Lemma~\ref{lem:wong_thm1} hold for $h_x,\varphi_x$ at $t=1$ with $\mu=\alpha=1$, Corollary~\ref{cor:wong_mu1_alpha1_rate}
yields
\[
\int_x^\infty A_s(w)e^{-S_s(w)}\,dw
=
\frac{A_s(x)}{S_s'(x)}\,e^{-S_s(x)}
\Bigl(1+\frac{\eta_{s}(x)}{\lambda}+O(\lambda^{-2})\Bigr),
\qquad \lambda=r(x)^2,
\]
where
\[
\eta_s(x)
=
\frac{\varphi_x'(1)}{\varphi_x(1)\,h_x'(1)}
-
\frac{h_x''(1)}{h_x'(1)^2}.
\]

\noindent\textbf{Verification of Lemma 2 assumptions (case $\mu=\alpha=1$).}
From Step~3 in the Prefactor section, fix an admissible sign pattern $s$ and set $\lambda:=r(x)^2$,
$h_x(t):=S_s(xt)/\lambda$, and $\varphi_x(t):=xA_s(xt)$ on $t\in[1,\infty)$.

\emph{(i) assumption.}We have, as $w\to\infty$,
\[
S_s'(w)=\frac{1}{\big(\prod_{j=1}^n\sigma_j\big)\,r(w)^{\,n-2}}\Big(1+O(r(w)^{-1})\Big)>0,
\]
hence $S_s'(w)>0$ for all $w$ large enough. Therefore, for $x$ large and all $t\ge 1$,
\[
h_x'(t)=\frac{1}{\lambda}\frac{d}{dt}S_s(xt)=\frac{x}{\lambda}S_s'(xt)>0,
\qquad \lambda=r(x)^2,
\]
so $h_x$ is strictly increasing on $[1,\infty)$ and attains its unique minimum at $t=1$.

Fix $\delta>0$. By monotonicity,
\[
\inf_{t\ge 1+\delta}\bigl(h_x(t)-h_x(1)\bigr)=h_x(1+\delta)-h_x(1),
\]
and the saddle expansion gives
\[
h_x(1+\delta)-h_x(1)
=\frac{n}{2}\big((1+\delta)^{2/n}-1\big)+O(r(x)^{-1})>0
\]
for all $x$ large, which verifies hypothesis \textup{(i)}.

\emph{(ii) assumption.}
Since $S_s$ and $A_s$ are obtained by evaluating smooth functions at the minimizer, they are continuous.
Therefore $h_x'$ and $\varphi_x$ are continuous near $t=1$.

\emph{(iii) assumption.}
By differentiability of $S_s$ and continuity of $A_s$,
\[
h_x(t)=h_x(1)+h_x'(1)(t-1)+O((t-1)^2),
\qquad
\varphi_x(t)=\varphi_x(1)+O(t-1),
\qquad (t\downarrow 1).
\]
 \quad\text{where}\quad
\[
h_x'(1)=\frac{x}{\lambda}S_s'(x)
=\frac{x}{r(x)^2}\cdot \frac{1}{\big(\prod_{j=1}^n\sigma_j\big)\,r(x)^{n-2}}\Big(1+O(r(x)^{-1})\Big)
=1+O\!\big(r(x)^{-1}\big)>0,
\]
for $x$ large. Thus we are in the case $\mu=\alpha=1$.

\emph{(iv) assumption.}
Fix $s$ and recall $r(w):=\big(w/\prod_{j=1}^n\sigma_j\big)^{1/n}$.
From \eqref{eq:tangent_laplace_n} we have, for $w$ large,
\[
A_s(w)=\frac{\pi^{(n-1)/2}}{\sqrt n\,r(w)^{\,n-1}}\Big(1+O(r(w)^{-1})\Big),
\]
hence there exist constants $C_A>0$ and $W_1$ such that
\[
0\le A_s(w)\le \frac{C_A}{r(w)^{\,n-1}},\qquad w\ge W_1.
\]
Moreover, the saddle expansion gives
\[
S_s(w)=\frac n2\,r(w)^2 - L_s r(w) + O(1),
\]
so there exist $c_S>0$ and $W_2$ such that
\[
S_s(w)\ge c_S\,r(w)^2,\qquad w\ge W_2.
\]
Let $W:=\max\{W_1,W_2\}$. Then for $x\ge W$,
\[
0\le \int_x^\infty A_s(w)e^{-S_s(w)}\,dw
\le C_A\int_x^\infty r(w)^{-(n-1)}e^{-c_S r(w)^2}\,dw
\le C_A\int_x^\infty e^{-c_S r(w)^2}\,dw.
\]
Since $r(w)^2=\big(w/\prod\sigma_j\big)^{2/n}$, the last integral is bounded:
\[
\int_x^\infty e^{-c_S r(w)^2}\,dw
=
\int_x^\infty \exp\!\left(-c_S\left(\frac{w}{\prod_{j=1}^n\sigma_j}\right)^{2/n}\right)\,dw
<\infty.
\]
Therefore $\int_x^\infty A_s(w)e^{-S_s(w)}\,dw$ converges (hence absolutely) for all sufficiently large $x$,
which verifies hypothesis \textup{(iv)} in our application.

\section{General n-case: setup and tail integral}

Let $X_i\sim N(\mu_i,\sigma_i^2)$ be independent, with $\sigma_i>0$, and define
\[
Z:=\prod_{i=1}^n X_i,\qquad \bF_n(x):=\mathbb{P}(Z> x),\qquad x\to+\infty.
\]

The joint density is
\[
\prod_{i=1}^n f_{X_i}(u_i)
=
\frac{1}{(2\pi)^{n/2}\prod_{i=1}^n\sigma_i}\,
\exp\!\left\{-\sum_{i=1}^n\frac{(u_i-\mu_i)^2}{2\sigma_i^2}\right\}.
\]
Expand the exponent:
\[
\sum_{i=1}^n\frac{(u_i-\mu_i)^2}{2\sigma_i^2}
=
\underbrace{\sum_{i=1}^n\frac{\mu_i^2}{2\sigma_i^2}}_{-\log C}
+
\underbrace{\sum_{i=1}^n\left(\frac{u_i^2}{2\sigma_i^2}-\frac{\mu_i}{\sigma_i^2}u_i\right)}_{=:\ \Psi(\bu)},
\qquad \bu=(u_1,\dots,u_n).
\]
Thus
\[
C:=\exp\!\left(-\sum_{i=1}^n\frac{\mu_i^2}{2\sigma_i^2}\right),
\qquad
\Psi(\bu):=\sum_{i=1}^n\left(\frac{u_i^2}{2\sigma_i^2}-\frac{\mu_i}{\sigma_i^2}u_i\right),
\]
and
\begin{equation}
\bF_n(x)
=
\frac{C}{(2\pi)^{n/2}\prod_{i=1}^n\sigma_i}
\int_{\{\prod_{i=1}^n u_i\ge x\}}
\exp\!\big(-\Psi(\bu)\big)\,\dd \bu.
\label{eq:Fn_integral}
\end{equation}

\section{Geometry of the constraint: which sign patterns matter}

We are in the right tail $x>0$. The constraint $\prod_{i=1}^n u_i\ge x>0$ forces
$\prod_{i=1}^n u_i>0$, i.e. an even number of negative coordinates.

Equivalently, introduce a sign vector $s=(s_1,\dots,s_n)\in\{\pm1\}^n$ and call a sign pattern
admissible if
\begin{equation}
\prod_{i=1}^n s_i=+1.
\label{eq:admissible_signs}
\end{equation}

There are $2^{n-1}$ admissible sign patterns.

\section{Regime decomposition: the unbalanced regions are negligible}

If at least one coordinate is too small, the product constraint forces
some other coordinate to be huge, and Gaussian tails kill that contribution.

Define
\[
a_x:=\frac{x^{1/n}}{\log x},
\qquad
b_x:=\left(\frac{x}{a_x}\right)^{1/(n-1)}
= x^{1/n}(\log x)^{1/(n-1)}.
\]

\paragraph{Balanced vs unbalanced sets.}
Define
\[
R_2(x):=\Big\{\bu:\ \prod_{j=1}^n u_j\ge x,\ \exists\, i\in\{1,\dots,n\}\ \text{s.t.}\ |u_i|\le a_x\Big\},
\]
\[
R_1(x):=\Big\{\bu:\ \prod_{j=1}^n u_j\ge x,\ \forall\, i\in\{1,\dots,n\}\ \text{we have}\ |u_i|> a_x\Big\}.
\]
Clearly
\[
R_1(x)\cap R_2(x)=\varnothing,
\qquad
\Big\{\bu:\ \prod_{j=1}^n u_j\ge x\Big\}=R_1(x)\cup R_2(x).
\]

Write
\[
I_1(x):=\frac{C}{(2\pi)^{n/2}\prod_{i=1}^n\sigma_i}\int_{R_1(x)}e^{-\Psi(\bu)}\,\dd\bu,
\qquad
I_2(x):=\frac{C}{(2\pi)^{n/2}\prod_{i=1}^n\sigma_i}\int_{R_2(x)}e^{-\Psi(\bu)}\,\dd\bu,
\]
so $\bF_n(x)=I_1(x)+I_2(x)$.

\paragraph{Step 1: upper bound for $I_2(x)$.}
If $\bu\in R_2(x)$ then $\prod_j u_j\ge x>0$, hence $\prod_j |u_j|\ge x$, and there exists
$i$ with $|u_i|\le a_x$.
If we also had $\max_j|u_j|<b_x$, then
\[
\prod_{j=1}^n |u_j|
< a_x\,(b_x)^{n-1}=x,
\]
because $\max_j|u_j|<b_x$, a contradiction. Hence on $R_2(x)$,
\[
\max_{1\le j\le n}|u_j|\ge b_x.
\]
Therefore
\[
I_2(x)=\mathbb{P}(X\in R_2(x))
\le \mathbb{P}\!\left(\max_{1\le j\le n}|X_j|\ge b_x\right)
= \mathbb{P}\!\left(\bigcup_{j=1}^n \{|X_j|\ge b_x\}\right)
\le \sum_{j=1}^n \mathbb{P}(|X_j|\ge b_x).
\]
Let us temporarily fix $j\in\{1,\dots,n\}$ and let $Y\sim N(\mu_j,\sigma_j^2)$.
By Markov's inequality, for any $t>0$ and any $u\in\mathbb{R}$,
\[
\mathbb{P}(Y\ge u)
=
\mathbb{P}(e^{tY}\ge e^{tu})
\le e^{-tu}\,\mathbb{E}e^{tY}
=
\exp\!\left(-tu+\mu_j t+\frac{\sigma_j^2 t^2}{2}\right).
\]
If $u>\mu_j$, the right-hand side is minimized at
\[
t=\frac{u-\mu_j}{\sigma_j^2}>0,
\]
and therefore
\[
\mathbb{P}(Y\ge u)\le
\exp\!\left(-\frac{(u-\mu_j)^2}{2\sigma_j^2}\right).
\]
Applying the same argument to $-Y\sim N(-\mu_j,\sigma_j^2)$, for $u\ge -\mu_j$ we get
\[
\mathbb{P}(Y\le -u)\le
\exp\!\left(-\frac{(u+\mu_j)^2}{2\sigma_j^2}\right).
\]
Hence, for $u\ge |\mu_j|$,
\[
\mathbb{P}(|X_j|\ge u)
=
\mathbb{P}(X_j\ge u)+\mathbb{P}(X_j\le -u)
\le
\exp\!\left(-\frac{(u-\mu_j)^2}{2\sigma_j^2}\right)
+
\exp\!\left(-\frac{(u+\mu_j)^2}{2\sigma_j^2}\right).
\]
Applying this bound with $u=b_x$ (which holds for all sufficiently large $x$), we obtain
\[
\mathbb{P}(|X_j|\ge b_x)
\le
\exp\!\left(-\frac{(b_x-\mu_j)^2}{2\sigma_j^2}\right)
+
\exp\!\left(-\frac{(b_x+\mu_j)^2}{2\sigma_j^2}\right).
\]
Hence,
\[
I_2(x)
\le
\sum_{j=1}^n \Bigg[
\exp\!\Big(-\frac{(b_x-\mu_j)^2}{2\sigma_j^2}\Big)
+
\exp\!\Big(-\frac{(b_x+\mu_j)^2}{2\sigma_j^2}\Big)
\Bigg],
\qquad
b_x=\left(\frac{x}{a_x}\right)^{1/(n-1)},
\qquad
a_x=\frac{x^{1/n}}{\log x}.
\]

\paragraph{Step 2: lower bound for $I_1(x)$.}
Define the balanced point in the region where all coordinates are positive:
\[
u_{0,i}(x):=\sigma_i\left(\frac{x}{\prod_{k=1}^n\sigma_k}\right)^{1/n},
\qquad i=1,\dots,n,
\]
so that $\prod_{i=1}^n u_{0,i}(x)=x$.
Set
\[
\delta_x:=\frac{1}{2n}\left(\frac{\prod_{k=1}^n\sigma_k}{x}\right)^{(n-1)/n},
\qquad
S_+(x):=\prod_{i=1}^n\Big(u_{0,i}(x),\,u_{0,i}(x)+\sigma_i\delta_x\Big].
\]
Then $S_+(x)\subset\{\prod u_i> x\}$.

Thus
\[
I_1(x)\ge \frac{C}{(2\pi)^{n/2}\prod_{i=1}^n\sigma_i}\int_{S_+(x)}e^{-\Psi(\bu)}\,\dd\bu.
\]

\paragraph{Step 2a: control $\Psi$ on $S_+(x)$.}
For $\bu\in S_+(x)$ we have
\[
0\le u_i-u_{0,i}(x)\le \sigma_i\delta_x,
\qquad i=1,\dots,n.
\]
Using $\Psi(\bu)=\sum_{i=1}^n\left(\frac{u_i^2}{2\sigma_i^2}-\frac{\mu_i}{\sigma_i^2}u_i\right)$, expand:
\[
\Psi(\bu)-\Psi(\bu_0(x))
=
\sum_{i=1}^n\left[
\frac{(u_i-u_{0,i}(x))^2}{2\sigma_i^2}
+\frac{(u_i-u_{0,i}(x))(u_{0,i}(x)-\mu_i)}{\sigma_i^2}
\right].
\]
Since $|u_i-u_{0,i}(x)|\le \sigma_i\delta_x$ and $u_{0,i}(x)=\sigma_i r(x)$, each bracket is bounded by
\[
\frac{\delta_x^2}{2}+\delta_x\left(r(x)+\frac{|\mu_i|}{\sigma_i}\right).
\]
Moreover,
\[
\delta_x r(x)=\frac{1}{2n}r(x)^{-(n-2)}\le \frac14 \qquad (x\ \text{large}),
\]
(where the value $\frac14$ comes from plugging $n=2$). Since $\delta_x\to0$, the bound
\[
\frac{\delta_x^2}{2}+\delta_x\left(r(x)+\frac{|\mu_i|}{\sigma_i}\right)
\]
is below some constant for large $x$. Hence there exists $K$ (independent of $x$) such that
\[
\Psi(\bu)\le \Psi(\bu_0(x))+K,
\qquad \bu\in S_+(x).
\]
and therefore
\[
e^{-\Psi(\bu)}\ge e^{-K}e^{-\Psi(\bu_0(x))},
\qquad \bu\in S_+(x).
\]

\paragraph{Step 2b: integrate the lower bound.}
Here $|S_+(x)|$ denotes the volume of $S_+(x)$, and
\[
|S_+(x)|
=\prod_{i=1}^n\Big(u_{0,i}(x)+\sigma_i\delta_x-u_{0,i}(x)\Big)
=\prod_{i=1}^n(\sigma_i\delta_x).
\]
Therefore

\begin{align*}
I_1(x)&\ge \frac{C}{(2\pi)^{n/2}\prod_{i=1}^n\sigma_i}\int_{S_+(x)}e^{-\Psi(\bu)}\,\dd\bu \\
&\ge
\frac{C}{(2\pi)^{n/2}\prod_{i=1}^n\sigma_i}\,e^{-K}e^{-\Psi(\bu_0(x))}\,|S_+(x)| \\
&=
\frac{C}{(2\pi)^{n/2}}\,e^{-K}e^{-\Psi(\bu_0(x))}\,\delta_x^{\,n}.
\end{align*}

\paragraph{Conclusion.}
From Step 1 and Step 2b, for all large $x$,
\[
0\le \frac{I_2(x)}{I_1(x)}
\le
\frac{\displaystyle \sum_{j=1}^n\left[
\exp\!\Big(-\frac{(b_x-\mu_j)^2}{2\sigma_j^2}\Big)
+
\exp\!\Big(-\frac{(b_x+\mu_j)^2}{2\sigma_j^2}\Big)
\right]}
{\displaystyle \frac{C}{(2\pi)^{n/2}}\,e^{-K}e^{-\Psi(\bu_0(x))}\,\delta_x^{\,n}}
\;\longrightarrow\;0,
\qquad x\to\infty.
\]
For the remainder of the paper we may restrict attention to the balanced region $R_1(x)$.

\section{Saddle system}
\label{sec:saddle}

Fix an admissible sign pattern $s\in\{\pm1\}^n$ with $\prod s_i=+1$.
Minimize $\Psi(\bu)$ on the boundary
\[
M_x^{(s)}:=\Big\{\bu:\ \prod_{i=1}^n u_i=x,\ \ s_i u_i>0\ \forall i\Big\}.
\]

Introduce the Lagrangian
\[
\mathcal{L}(\bu,\lambda)=\Psi(\bu)+\lambda\Big(\prod_{i=1}^n u_i-x\Big).
\]
Stationarity gives for each $i$:
\[
\partial_{u_i}\Psi(\bu)+\lambda\prod_{j\ne i}u_j=0
\quad\Longleftrightarrow\quad
\frac{u_i-\mu_i}{\sigma_i^2}+\lambda\frac{x}{u_i}=0
\]
\paragraph{Reduction via the common scalar $\lambda x$.}
From the stationarity conditions,
\[
\frac{u_i-\mu_i}{\sigma_i^2}+\lambda\prod_{j\ne i}u_j=0,
\qquad i=1,\dots,n,
\]
and using the constraint $\prod_{j=1}^n u_j=x$ (so $\prod_{j\ne i}u_j=x/u_i$), we get
\[
\frac{u_i-\mu_i}{\sigma_i^2}+\lambda\frac{x}{u_i}=0
\quad\Longleftrightarrow\quad
u_i^2-\mu_i u_i+\sigma_i^2(\lambda x)=0.
\]
Thus every coordinate shares the same scalar $\lambda x$, i.e.
\[
\lambda x=\frac{\mu_i u_i-u_i^2}{\sigma_i^2},\qquad i=1,\dots,n.
\]
Equating the expressions for indices $i$ and $k$ eliminates $\lambda x$ and yields the pairwise identity
\begin{equation}
\sigma_k^2(\mu_i u_i-u_i^2)=\sigma_i^2(\mu_k u_k-u_k^2),
\qquad 1\le i,k\le n.
\label{eq:lambda_common_pairwise}
\end{equation}

\paragraph{Uniqueness of the stationary point in $M_x^{(s)}$.}
Set $\beta:=-\lambda x$. For the boundary saddle one has $\lambda x<0$, hence $\beta>0$.
Then the equations are equivalent to
\[
u_i^2-\mu_i u_i-\sigma_i^2\beta=0,\qquad i=1,\dots,n,
\]
together with the sign constraints $s_i u_i>0$ and the product constraint $\prod_{i=1}^n u_i=x$.
For each $i$ the sign constraint selects a unique root, namely
\[
u_i(\beta)=\frac{\mu_i\pm\sqrt{\mu_i^2+4\sigma_i^2\beta}}{2},
\qquad i=1,\dots,n.
\]
Define $g_s(\beta):=\prod_{i=1}^n u_i(\beta)$. Then $g_s$ is strictly increasing on $(0,\infty)$ since
\[
\frac{d}{d\beta}\log|u_i(\beta)|
=\frac{u_i'(\beta)}{u_i(\beta)}
=\frac{\pm\,\sigma_i^2}{u_i(\beta)\sqrt{\mu_i^2+4\sigma_i^2\beta}}>0,
\]
where the sign $\pm$ is the same one used in the definition
\[
u_i(\beta)=\frac{\mu_i\pm\sqrt{\mu_i^2+4\sigma_i^2\beta}}{2},
\]
so $u_i(\beta)$ has the same sign as $\pm$ and hence $(\pm)/u_i(\beta)>0$.

Therefore $g_s'(\beta)>0$.
Hence the equation $g_s(\beta)=x$ has at most one solution, so the stationarity system has at most one solution in $M_x^{(s)}$ for each fixed $s$.

\paragraph{Leading order}
Rewrite \eqref{eq:lambda_common_pairwise} as
\[
\sigma_k^2 u_i^2-\sigma_i^2 u_k^2
=
\sigma_k^2\mu_i u_i-\sigma_i^2\mu_k u_k.
\]
At the boundary saddle we have $|u_i|\to\infty$ as $x\to\infty$, so the quadratic terms dominate the linear ones.
Keeping only the dominant terms gives
\[
\sigma_k^2 u_i^2 \approx \sigma_i^2 u_k^2,
\qquad 1\le i,k\le n.
\]
Equivalently,
\[
\frac{u_i^2}{\sigma_i^2}\approx \frac{u_k^2}{\sigma_k^2},
\qquad 1\le i,k\le n,
\]
so all ratios $u_i^2/\sigma_i^2$ are (asymptotically) the same. Hence there exists a scalar $r(x)>0$
such that, for each $i$,
\[
u_i^2 \approx \sigma_i^2\,r(x)^2,
\qquad\text{that is}\qquad
u_i \approx s_i\,\sigma_i\,r(x),
\qquad s_i\in\{\pm1\}.
\]
Imposing the constraint $\prod_{i=1}^n u_i=x$ then yields
\[
x=\prod_{i=1}^n u_i \approx \Big(\prod_{i=1}^n s_i\Big)\Big(\prod_{i=1}^n \sigma_i\Big)\,r(x)^n.
\]
Since $x>0$, admissible sign patterns satisfy $\prod_{i=1}^n s_i=+1$, hence
\[
r(x)^n \approx \frac{x}{\prod_{i=1}^n \sigma_i}
\quad\Longrightarrow\quad
r(x) \approx \left(\frac{x}{\prod_{i=1}^n \sigma_i}\right)^{1/n}.
\]
Therefore the leading-order saddle shape is the balanced scale
\begin{equation}
u_i \approx s_i\,\sigma_i\left(\frac{x}{\prod_{j=1}^n \sigma_j}\right)^{1/n},
\qquad s_i\in\{\pm1\},\qquad \prod_{i=1}^n s_i=+1.
\label{eq:leading_shape}
\end{equation}

\section{Constant-order correction at a fixed sign pattern}

Recall the leading balanced size
\begin{equation}
r(x):=\left(\frac{x}{\prod_{j=1}^n \sigma_j}\right)^{1/n},
\qquad r(x)\to\infty \ \ (x\to\infty).
\label{eq:r_def}
\end{equation}
Fix an admissible sign pattern $s\in\{\pm1\}^n$ with $\prod_i s_i=+1$, and set
\[
\Sigma:=\prod_{i=1}^n \sigma_i
\qquad\text{so that}\qquad
\Sigma\,r(x)^n=x.
\]

\paragraph{Step 1: plug $u_i=s_i\sigma_i r+\delta_i$ into the pairwise identity.}
From Section~\ref{sec:saddle} (eliminating $\lambda$), for every pair $i,k$,
\begin{equation}
\frac{\mu_i u_i-u_i^2}{\sigma_i^2}=\frac{\mu_k u_k-u_k^2}{\sigma_k^2}.
\label{eq:lambda_common_pairwise_n}
\end{equation}
Equivalently,
\begin{equation}
\sigma_k^2 u_i^2-\sigma_i^2 u_k^2=\sigma_k^2\mu_i u_i-\sigma_i^2\mu_k u_k .
\label{eq:pairwise_square_linear}
\end{equation}

Write
\begin{equation}
u_i=s_i\sigma_i r+\delta_i,
\qquad
u_k=s_k\sigma_k r+\delta_k,
\qquad r\to\infty,
\label{eq:ui_delta_ansatz}
\end{equation}
with $\delta_i,\delta_k=O(1)$. Expanding $u_i^2,u_k^2$ and inserting into
\eqref{eq:pairwise_square_linear}, the $r^2$ terms cancel, and matching the remaining $r$--terms yields
\[
\sigma_k\, s_i\,(2\delta_i-\mu_i)=\sigma_i\, s_k\,(2\delta_k-\mu_k).
\]
Hence $\dfrac{s_i}{\sigma_i}(2\delta_i-\mu_i)$ is the same for all $i$, so there exists a constant $b_s$ such that
\begin{equation}
\frac{s_i}{\sigma_i}\,(2\delta_i-\mu_i)=b_s,
\qquad i=1,\dots,n,
\label{eq:delta_common_constant}
\end{equation}
i.e.
\begin{equation}
\delta_i=\frac{\mu_i}{2}+\frac{b_s}{2}\,s_i\sigma_i,
\qquad i=1,\dots,n.
\label{eq:delta_solution}
\end{equation}

\paragraph{Step 1b: determine $b_s$ from the product constraint.}
Factor $s_i\sigma_i r$ and set
\begin{equation}
a_i:=\frac{\delta_i}{s_i\sigma_i}.
\label{eq:ai_def}
\end{equation}


\paragraph{Product expansion.}
Write
\[
u_i=s_i\sigma_i r\left(1+\frac{a_i}{r}\right),
\qquad
P(r):=\prod_{i=1}^n\left(1+\frac{a_i}{r}\right),
\qquad
S_1:=\sum_{i=1}^n a_i,\ \ S_2:=\sum_{i=1}^n a_i^2 .
\]
Since $a_i=O(1)$, we have $a_i/r\to0$, so using $\log(1+t)=t-\tfrac12 t^2+O(t^3)$ gives
\[
\log P(r)
=\sum_{i=1}^n\log\!\left(1+\frac{a_i}{r}\right)
=\frac{S_1}{r}-\frac{S_2}{2r^2}+O(r^{-3}).
\]
Let $z:=\frac{S_1}{r}-\frac{S_2}{2r^2}+O(r^{-3})$. Then $z=O(r^{-1})$, hence
$e^{z}=1+z+\tfrac12 z^2+O(z^3)$ and $z^2=\frac{S_1^2}{r^2}+O(r^{-3})$, $z^3=O(r^{-3})$.
Therefore
\[
P(r)
=1+\frac{S_1}{r}+\frac{S_1^2-S_2}{2r^2}+O(r^{-3}),
\]
and consequently
\begin{equation}
\prod_{i=1}^n u_i
=\Big(\prod_{i=1}^n s_i\sigma_i\Big)\,r^n
\left(
1+\frac{1}{r}\sum_{i=1}^n a_i+\frac{1}{2r^2}\Big[\Big(\sum_{i=1}^n a_i\Big)^2-\sum_{i=1}^n a_i^2\Big]
+O(r^{-3})
\right).
\label{eq:prod_up_to_rminus3}
\end{equation}

Then
\[
u_i=s_i\sigma_i r\left(1+\frac{a_i}{r}\right),
\qquad
\prod_{i=1}^n u_i
=\Big(\prod_{i=1}^n s_i\sigma_i\Big)\,r^n
\left(1+\frac{1}{r}\sum_{i=1}^n a_i+O(r^{-2})\right).
\]
Since $\prod_i s_i=+1$, we have $\prod_i s_i\sigma_i=\Sigma$, hence
\begin{equation}
\prod_{i=1}^n u_i
=
\Sigma\,r^n\left(1+\frac{1}{r}\sum_{i=1}^n a_i+O(r^{-2})\right).
\label{eq:prod_first}
\end{equation}
Using $\Sigma r^n=x$, \eqref{eq:prod_first} becomes
\[
\prod_{i=1}^n u_i
=
x\left(1+\frac{1}{r}\sum_{i=1}^n a_i+O(r^{-2})\right).
\]
Imposing the exact constraint $\prod_i u_i=x$ forces the $1/r$ coefficient to vanish:
\begin{equation}
\sum_{i=1}^n a_i=0.
\label{eq:sum_ai_zero}
\end{equation}
Using \eqref{eq:delta_solution},
\[
a_i=\frac{\delta_i}{s_i\sigma_i}
=\frac{\mu_i}{2s_i\sigma_i}+\frac{b_s}{2}.
\]
Thus \eqref{eq:sum_ai_zero} gives
\begin{equation}
0=\sum_{i=1}^n\left(\frac{\mu_i}{2s_i\sigma_i}+\frac{b_s}{2}\right)
\quad\Longrightarrow\quad
b_s=-\frac{1}{n}\sum_{i=1}^n\frac{\mu_i}{s_i\sigma_i}.
\label{eq:bs_solution_n}
\end{equation}

\paragraph{Step 2: one order further.}
Refine the expansion by allowing a coordinate-dependent $O(r^{-1})$ correction:
\begin{equation}
u_i=s_i\sigma_i r+\delta_i+\frac{\varepsilon_i}{r}+O(r^{-2}),
\qquad i=1,\dots,n.
\label{eq:ui_eps_ansatz}
\end{equation}

Since the quantity
\[
\frac{\mu_i u_i-u_i^2}{\sigma_i^2}
\]
is the same for all $i$ by \eqref{eq:lambda_common_pairwise_n}, substituting
\eqref{eq:ui_eps_ansatz} and using \eqref{eq:delta_solution} give
\[
\frac{\mu_i\delta_i-\delta_i^2-2s_i\sigma_i\varepsilon_i}{\sigma_i^2}
=
\frac{\mu_k\delta_k-\delta_k^2-2s_k\sigma_k\varepsilon_k}{\sigma_k^2},
\qquad 1\le i,k\le n.
\]
Hence there exists a constant $d_s$ such that
\begin{equation}
\frac{\mu_i\delta_i-\delta_i^2-2s_i\sigma_i\varepsilon_i}{\sigma_i^2}=d_s,
\qquad i=1,\dots,n,
\label{eq:eps_common_constant}
\end{equation}
that is,
\begin{equation}
\varepsilon_i
=
\frac{\mu_i\delta_i-\delta_i^2-d_s\sigma_i^2}{2s_i\sigma_i},
\qquad i=1,\dots,n.
\label{eq:eps_solution_pre}
\end{equation}

\paragraph{Product expansion to the next order.}
Write
\[
u_i=s_i\sigma_i r\left(1+\frac{a_i}{r}+\frac{\rho_i}{r^2}+O(r^{-3})\right),
\qquad
a_i:=\frac{\delta_i}{s_i\sigma_i},
\qquad
\rho_i:=\frac{\varepsilon_i}{s_i\sigma_i}.
\]
Let
\[
P(r):=\prod_{i=1}^n\left(1+\frac{a_i}{r}+\frac{\rho_i}{r^2}+O(r^{-3})\right),
\qquad
S_1:=\sum_{i=1}^n a_i,\qquad
S_2:=\sum_{i=1}^n a_i^2.
\]
Since $a_i=O(1)$ and $\rho_i=O(1)$, we have
\[
\frac{a_i}{r}+\frac{\rho_i}{r^2}+O(r^{-3})\to0,
\]
so using $\log(1+t)=t-\tfrac12 t^2+O(t^3)$ gives
\[
\log P(r)
=
\sum_{i=1}^n
\log\!\left(1+\frac{a_i}{r}+\frac{\rho_i}{r^2}+O(r^{-3})\right).
\]
Now
\[
\log\!\left(1+\frac{a_i}{r}+\frac{\rho_i}{r^2}+O(r^{-3})\right)
=
\frac{a_i}{r}
+\frac{\rho_i}{r^2}
-\frac{a_i^2}{2r^2}
+O(r^{-3}),
\]
hence
\[
\log P(r)
=
\frac{S_1}{r}
+\frac{1}{r^2}\left(\sum_{i=1}^n \rho_i-\frac12 S_2\right)
+O(r^{-3}).
\]
Let
\[
z:=\frac{S_1}{r}
+\frac{1}{r^2}\left(\sum_{i=1}^n \rho_i-\frac12 S_2\right)
+O(r^{-3}).
\]
Then $z=O(r^{-1})$, so
\[
e^z=1+z+\frac12 z^2+O(z^3),
\qquad
z^2=\frac{S_1^2}{r^2}+O(r^{-3}),
\qquad
z^3=O(r^{-3}).
\]
Therefore
\[
P(r)
=
1+\frac{S_1}{r}
+\frac{1}{r^2}\left(
\sum_{i=1}^n \rho_i+\frac12(S_1^2-S_2)
\right)
+O(r^{-3}).
\]
Consequently,
\[
\prod_{i=1}^n u_i
=
\Sigma r^n
\left(
1+\frac{1}{r}\sum_{i=1}^n a_i
+\frac{1}{r^2}\left[\sum_{i=1}^n \rho_i+\frac12\left(\left(\sum_{i=1}^n a_i\right)^2-\sum_{i=1}^n a_i^2\right)\right]
+O(r^{-3})
\right).
\]
Since \eqref{eq:sum_ai_zero} gives $\sum_{i=1}^n a_i=0$, the exact constraint $\prod_{i=1}^n u_i=x=\Sigma r^n$
forces
\begin{equation}
\sum_{i=1}^n \rho_i=\frac12\sum_{i=1}^n a_i^2.
\label{eq:sum_rhoi_halfS2}
\end{equation}

Using \eqref{eq:eps_solution_pre} and $a_i=\delta_i/(s_i\sigma_i)$, equation
\eqref{eq:sum_rhoi_halfS2} becomes
\[
\sum_{i=1}^n\frac{\mu_i\delta_i-\delta_i^2-d_s\sigma_i^2}{2\sigma_i^2}
=
\frac12\sum_{i=1}^n\frac{\delta_i^2}{\sigma_i^2},
\]
hence
\[
\sum_{i=1}^n\frac{\mu_i\delta_i-2\delta_i^2}{\sigma_i^2}=nd_s.
\]
Now from \eqref{eq:delta_solution},
\[
\delta_i=\frac{\mu_i}{2}+\frac{b_s}{2}s_i\sigma_i,
\]
so
\[
\frac{\mu_i\delta_i-2\delta_i^2}{\sigma_i^2}
=
-\frac{b_s^2}{2}-\frac{b_s}{2}\frac{\mu_i}{s_i\sigma_i}.
\]
Summing over $i$ and using \eqref{eq:bs_solution_n},
\[
\sum_{i=1}^n\frac{\mu_i\delta_i-2\delta_i^2}{\sigma_i^2}
=
-\frac{n b_s^2}{2}
-\frac{b_s}{2}\sum_{i=1}^n\frac{\mu_i}{s_i\sigma_i}
=
-\frac{n b_s^2}{2}+\frac{n b_s^2}{2}=0.
\]
Therefore
\[
d_s=0.
\]

Substituting $d_s=0$ into \eqref{eq:eps_solution_pre} gives
\[
\varepsilon_i
=
\frac{\mu_i\delta_i-\delta_i^2}{2s_i\sigma_i}
=
\frac{s_i}{8}\left(\frac{\mu_i^2}{\sigma_i}-b_s^2\sigma_i\right).
\]

\paragraph{Conclusion.}
\[
u_i
=
s_i\sigma_i r+\delta_i+\frac{\varepsilon_i}{r}+O(r^{-2}),
\qquad i=1,\dots,n,
\]
that is,
\begin{align*}
u_i
&=
s_i\sigma_i r
+\frac12\left(\mu_i+\;b_s\,s_i\sigma_i\right)
+\frac{s_i}{8r}\left(\frac{\mu_i^2}{\sigma_i}-b_s^2\sigma_i\right)
+O(r^{-2}) \\
&=
s_i\sigma_i r
+\frac12\left(
\mu_i-\frac{s_i\sigma_i}{n}\sum_{j=1}^n\frac{\mu_j}{s_j\sigma_j}
\right)
+\frac{s_i}{8r}\left(
\frac{\mu_i^2}{\sigma_i}
-\frac{\sigma_i}{n^2}\left(\sum_{j=1}^n\frac{\mu_j}{s_j\sigma_j}\right)^2
\right)
+O(r^{-2}),
\qquad i=1,\dots,n.
\end{align*}

Equivalently,
\begin{align*}
u_i
=
s_i\sigma_i r\Bigg(
1
&+\frac{1}{2r}\left(
\frac{\mu_i}{s_i\sigma_i}
-\frac{1}{n}\sum_{j=1}^n\frac{\mu_j}{s_j\sigma_j}
\right) \\
&+\frac{1}{8r^2}\left(
\left(\frac{\mu_i}{\sigma_i}\right)^2
-\frac{1}{n^2}\left(\sum_{j=1}^n\frac{\mu_j}{s_j\sigma_j}\right)^2
\right)
+O(r^{-3})
\Bigg),
\qquad i=1,\dots,n.
\end{align*}

\section{Dominating exponent at the saddle and removing the sign-pattern sum}

Fix an admissible sign pattern $s\in\{\pm1\}^n$ with $\prod_{i=1}^n s_i=+1$.
Let $\bu^{(s)}(x)=(u_{1,s}(x),\dots,u_{n,s}(x))$.

Recall
\[
\Psi(\bu)=\sum_{i=1}^n\left(\frac{u_i^2}{2\sigma_i^2}-\frac{\mu_i}{\sigma_i^2}u_i\right),
\qquad
r(x)=\left(\frac{x}{\prod_{j=1}^n\sigma_j}\right)^{1/n}\to\infty.
\]

\paragraph{Exponent at the saddle.}
Define, for each admissible $s$,
\[
L_s:=\sum_{i=1}^n s_i\frac{\mu_i}{\sigma_i}.
\]
Then,
\begin{equation}
\begin{aligned}
\Psi\!\big(\bu^{(s)}(x)\big)
={}&
\frac{n}{2}\,r(x)^2-r(x)\,L_s
-\frac{1}{4}\left(
\sum_{i=1}^n\left(\frac{\mu_i}{\sigma_i}\right)^2
-\frac{1}{n}L_s^2
\right) \\
&\quad
+\frac{1}{16\,r(x)}
\left(
\frac{2}{n^2}L_s^3
-\frac{L_s}{n}\sum_{i=1}^n\left(\frac{\mu_i}{\sigma_i}\right)^2
-\sum_{i=1}^n s_i\left(\frac{\mu_i}{\sigma_i}\right)^3
\right)
+O\!\big(r(x)^{-2}\big),
\qquad x\to\infty.
\end{aligned}
\label{eq:Psi_saddle_n_recall}
\end{equation}
Hence
\begin{equation}
\begin{aligned}
\exp\!\big(-\Psi(\bu^{(s)}(x))\big)
={}&
\exp\!\Bigg\{
-\frac{n}{2}\,r(x)^2+r(x)\,L_s
+\frac{1}{4}\left(
\sum_{i=1}^n\left(\frac{\mu_i}{\sigma_i}\right)^2
-\frac{1}{n}L_s^2
\right) \\
&\quad
+\frac{1}{16\,r(x)}
\left(
\sum_{i=1}^n s_i\left(\frac{\mu_i}{\sigma_i}\right)^3
+\frac{L_s}{n}\sum_{i=1}^n\left(\frac{\mu_i}{\sigma_i}\right)^2
-\frac{2}{n^2}L_s^3
\right)
\Bigg\}
\Big(1+O(r(x)^{-2})\Big).
\end{aligned}
\label{eq:exp_saddle_n_recall}
\end{equation}
\paragraph{Admissible sign patterns.}
Let
\[
\mathcal{S}:=\Big\{s\in\{\pm1\}^n:\ \prod_{i=1}^n s_i=+1\Big\},
\qquad
|\mathcal{S}|=2^{n-1}.
\]

\section{Prefactor}

We apply a two-step Laplace scheme summarized in Section~\ref{sec:tools_lemmas}.

\paragraph{Step 1: change variables $(u_1,\dots,u_{n-1},w)$.}
Let
\[
w:=\prod_{i=1}^n u_i,
\qquad
(u_1,\dots,u_{n-1})=(u_1,\dots,u_{n-1}),
\qquad
u_n=\frac{w}{u_1\cdots u_{n-1}}.
\]
Then the Jacobian is
\[
\dd u_1\,\dd u_2\cdots \dd u_n
=
\frac{1}{|u_1\cdots u_{n-1}|}\,
\dd u_1\,\dd u_2\cdots \dd u_{n-1}\,\dd w.
\]

and $\{\prod u_i> x\}$ becomes $\{w> x\}$. Hence \eqref{eq:Fn_integral} becomes
\begin{equation}
\bF_n(x)
=
\frac{C}{(2\pi)^{n/2}\prod\sigma_i}
\int_{w=x}^\infty
\left[
\int_{\R^{n-1}}
\exp\!\Big(-\Phi_w(\tilde{\bu})\Big)\,
\frac{\dd \tilde{\bu}}{|u_1\cdots u_{n-1}|}
\right]\dd w,
\label{eq:Fn_uw}
\end{equation}
where $\tilde{\bu}=(u_1,\dots,u_{n-1})$ and
\[
\Phi_w(\tilde{\bu})
:=
\Psi\!\left(u_1,\dots,u_{n-1},\frac{w}{u_1\cdots u_{n-1}}\right).
\]

\paragraph{Step 2: Laplace in $(u_1,\dots,u_{n-1})$ at fixed $w$.}
Recall
\[
u_n=\frac{w}{u_1\cdots u_{n-1}},
\qquad
\Phi_w(u_1,\dots,u_{n-1})=\Psi\!\left(u_1,\dots,u_{n-1},\frac{w}{u_1\cdots u_{n-1}}\right).
\]
Fix an admissible sign region $s$ and let the minimizer be
\[
(u_{1,s}(w),\dots,u_{n-1,s}(w)),
\qquad
u_{n,s}(w):=\frac{w}{u_{1,s}(w)\cdots u_{n-1,s}(w)},
\qquad
S_s(w):=\Phi_w(u_{1,s}(w),\dots,u_{n-1,s}(w)).
\]

\emph{(a) Jacobian factor at the minimizer.}
From Section~8 with $x$ replaced by $w$,
\[
u_{i,s}(w)
=
s_i\sigma_i r(w)\left(
1+\frac{1}{2r(w)}\left(s_i\frac{\mu_i}{\sigma_i}-\frac{L_s}{n}\right)
+O(r(w)^{-2})
\right),
\qquad i=1,\dots,n.
\]
Therefore
\[
\prod_{i=1}^{n-1}u_{i,s}(w)
=
\Big(\prod_{i=1}^{n-1}s_i\sigma_i\Big)\,r(w)^{n-1}
\left(
1+\frac{1}{2r(w)}\sum_{i=1}^{n-1}\left(s_i\frac{\mu_i}{\sigma_i}-\frac{L_s}{n}\right)
+O(r(w)^{-2})
\right).
\]
Since
\[
\sum_{i=1}^{n-1}\left(s_i\frac{\mu_i}{\sigma_i}-\frac{L_s}{n}\right)
=
\frac{L_s}{n}-s_n\frac{\mu_n}{\sigma_n},
\]
we get, after taking absolute values,
\[
|u_{1,s}(w)\cdots u_{n-1,s}(w)|
=
\Big(\prod_{i=1}^{n-1}\sigma_i\Big)\,r(w)^{n-1}
\left(
1+\frac{1}{2r(w)}\left(\frac{L_s}{n}-s_n\frac{\mu_n}{\sigma_n}\right)
+O(r(w)^{-2})
\right).
\]
Consequently,
\[
\frac{1}{|u_{1,s}(w)\cdots u_{n-1,s}(w)|}
=
\frac{1}{\big(\prod_{i=1}^{n-1}\sigma_i\big)\,r(w)^{n-1}}
\left(
1+\frac{1}{2r(w)}\left(s_n\frac{\mu_n}{\sigma_n}-\frac{L_s}{n}\right)
+O(r(w)^{-2})
\right).
\]

\emph{(b) Hessian determinant at the minimizer.}
Let $H_s(w)$ be the Hessian matrix in the variables $u_1,\dots,u_{n-1}$:
\[
(H_s(w))_{ij}
:=
\frac{\partial^2\Phi_w}{\partial u_i\,\partial u_j}
\Big(u_{1,s}(w),\dots,u_{n-1,s}(w)\Big).
\]

Recall that for fixed $w$ we reduce to $(n-1)$ free variables by setting
\[
u_n=\frac{w}{u_1\cdots u_{n-1}},
\qquad
\Phi_w(u_1,\dots,u_{n-1})
:=\Psi\!\left(u_1,\dots,u_{n-1},\frac{w}{u_1\cdots u_{n-1}}\right).
\]
Equivalently, $\Phi_w$ can be written out as
\[
\Phi_w(u_1,\dots,u_{n-1})
=
\sum_{k=1}^{n-1}\left(\frac{u_k^2}{2\sigma_k^2}-\frac{\mu_k}{\sigma_k^2}u_k\right)
\;+\;
\frac{1}{2\sigma_n^2}\left(\frac{w}{u_1\cdots u_{n-1}}\right)^2
\;-\;
\frac{\mu_n}{\sigma_n^2}\left(\frac{w}{u_1\cdots u_{n-1}}\right).
\]

For $i\neq j$,
\[
(H_s(w))_{ij}
=
\frac{2u_{n,s}(w)^2-\mu_n u_{n,s}(w)}
{\sigma_n^2\,u_{i,s}(w)\,u_{j,s}(w)},
\qquad
(H_s(w))_{ii}
=
\frac{1}{\sigma_i^2}
+
\frac{3u_{n,s}(w)^2-2\mu_n u_{n,s}(w)}
{\sigma_n^2\,u_{i,s}(w)^2}.
\]

Now insert the saddle expansion from Section~8. A direct expansion gives
\[
(H_s(w))_{ii}
=
\frac{4}{\sigma_i^2}\left(
1+\frac{1}{4r(w)}
\left(
s_n\frac{\mu_n}{\sigma_n}-3s_i\frac{\mu_i}{\sigma_i}
\right)
+O(r(w)^{-2})
\right),
\]
and for $i\neq j$,
\[
(H_s(w))_{ij}
=
\frac{2s_is_j}{\sigma_i\sigma_j}\left(
1+\frac{1}{2r(w)}
\left(
s_n\frac{\mu_n}{\sigma_n}
-s_i\frac{\mu_i}{\sigma_i}
-s_j\frac{\mu_j}{\sigma_j}
\right)
+O(r(w)^{-2})
\right).
\]

Equivalently,
\[
H_s(w)=H_{0,s}+\frac{1}{r(w)}H_{1,s}+O(r(w)^{-2}),
\]
where
\[
H_{0,s}=2\,D_s\,(I_{n-1}+\mathbf 1\mathbf 1^\top)\,D_s,
\qquad
D_s:=\mathrm{diag}\!\left(\frac{s_1}{\sigma_1},\dots,\frac{s_{n-1}}{\sigma_{n-1}}\right).
\]
Using
\[
\det\!\left(H_{0,s}+\frac{1}{r(w)}H_{1,s}+O(r(w)^{-2})\right)
=
\det(H_{0,s})
\left(
1+\frac{1}{r(w)}\operatorname{tr}\!\big(H_{0,s}^{-1}H_{1,s}\big)
+O(r(w)^{-2})
\right),
\]
together with
\[
\det(H_{0,s})=\frac{n\,2^{n-1}}{\prod_{i=1}^{n-1}\sigma_i^2},
\qquad
\operatorname{tr}\!\big(H_{0,s}^{-1}H_{1,s}\big)
=
s_n\frac{\mu_n}{\sigma_n}-\frac{n+1}{2n}L_s,
\]
we obtain
\[
\det H_s(w)
=
\frac{n\,2^{\,n-1}}{\prod_{i=1}^{n-1}\sigma_i^2}
\left(
1+\frac{1}{r(w)}
\left(
s_n\frac{\mu_n}{\sigma_n}-\frac{n+1}{2n}L_s
\right)
+O(r(w)^{-2})
\right).
\]
Hence
\[
\sqrt{\det H_s(w)}
=
\frac{\sqrt n\,2^{(n-1)/2}}{\prod_{i=1}^{n-1}\sigma_i}
\left(
1+\frac{1}{2r(w)}
\left(
s_n\frac{\mu_n}{\sigma_n}-\frac{n+1}{2n}L_s
\right)
+O(r(w)^{-2})
\right).
\]

Putting (a)--(b) together gives
\begin{equation}
\frac{(2\pi)^{(n-1)/2}}{|u_{1,s}(w)\cdots u_{n-1,s}(w)|\sqrt{\det H_s(w)}}
=
\frac{\pi^{(n-1)/2}}{\sqrt n\,r(w)^{n-1}}
\left(
1+\frac{n-1}{4n}\frac{L_s}{r(w)}
+O(r(w)^{-2})
\right).
\label{eq:tangent_laplace_n}
\end{equation}

\emph{(c) Laplace evaluation of the inner integral.}
Define
\[
A_s(w):=\frac{(2\pi)^{(n-1)/2}}{|u_{1,s}(w)\cdots u_{n-1,s}(w)|\sqrt{\det H_s(w)}}.
\]
Then \eqref{eq:tangent_laplace_n} gives
\[
A_s(w)
=
\frac{\pi^{(n-1)/2}}{\sqrt n\,r(w)^{n-1}}
\left(
1+\frac{n-1}{4n}\frac{L_s}{r(w)}
+O(r(w)^{-2})
\right).
\]

\paragraph{Coefficient $\kappa_s(w)$.}
From the proof of Wong's Theorem~3, the coefficients in the multidimensional Laplace
expansion are given by
\[
c_k=\sum_{|\alpha|=2k}\frac{d_\alpha}{\alpha!}\,D^\alpha G(0).
\]
where \(d_\alpha/\alpha!\) is constant. Thus it suffices to show that
\[
\frac{D^\alpha G_{s,w}(0)}{G_{s,w}(0)}=O\!\left(r(w)^{-1}\right),
\qquad |\alpha|=2.
\]
In fact, the argument below yields a sharper estimate than this, which is more than enough for our purposes.

From Wong's proof we have
\[
G_{s,w}(y)=g(h_{s,w}(y))\,\det h'_{s,w}(y),
\]
and in our application
\[
g(\tilde{\bu})=\frac{1}{|u_1\cdots u_{n-1}|},
\qquad
f(\tilde{\bu})=\frac{\Phi_w(\tilde{\bu})}{r(w)}.
\]
Also,
\[
h_{s,w}(0)=\tilde{\bu}_s(w).
\]

Now the saddle expansion gives
\[
u_{i,s}(w)=s_i\sigma_i r(w)\bigl(1+O(r(w)^{-1})\bigr),
\qquad i=1,\dots,n,
\]
so in particular
\[
u_{i,s}(w)=O(r(w))
\qquad\text{and}\qquad
\frac{1}{u_{i,s}(w)}=O(r(w)^{-1}).
\]

Next, from Wong's proof, for suitably chosen numbers \(\nu_{1,s}(w),\dots,\nu_{n-1,s}(w)\), we have
\begin{equation}
f(h_{s,w}(y))
=
f(\tilde{\bu}_s(w))
+\frac12\sum_{j=1}^{n-1}\nu_{j,s}(w)\,y_j^2.
\label{eq:hsw_normal_form}
\end{equation}

We now expand \(f\circ h_{s,w}\) in the variable \(y\) around \(y=0\). By the second-order Taylor expansion,
\begin{align}
f(h_{s,w}(y))
&=
f(h_{s,w}(0))
+
Df(h_{s,w}(0))\,Dh_{s,w}(0)\,y \notag\\
&\qquad
+
\frac12\,y^T Dh_{s,w}(0)^T D^2f(h_{s,w}(0))\,Dh_{s,w}(0)\,y
+
O\!\left(\|y\|^3\right),
\label{eq:taylor_f_hsw_y}
\end{align}

Since the right-hand side of \eqref{eq:hsw_normal_form} has no term with \(y\), we must have
\[
Df(h_{s,w}(0))\,Dh_{s,w}(0)=0.
\]
Hence,
\[
Df(h_{s,w}(0))=Df(\tilde{\bu}_s(w))=0,
\]

and therefore
\begin{align}
f(h_{s,w}(y))
&=
f(\tilde{\bu}_s(w))
+
\frac12\,y^T Dh_{s,w}(0)^T D^2f(\tilde{\bu}_s(w))\,Dh_{s,w}(0)\,y
+
O\!\left(\|y\|^3\right).
\label{eq:taylor_f_hsw_y_saddle}
\end{align}
Comparing \eqref{eq:taylor_f_hsw_y_saddle} with identity
\eqref{eq:hsw_normal_form}, we obtain
\[
Dh_{s,w}(0)^T D^2f(\tilde{\bu}_s(w))\,Dh_{s,w}(0)
=
\operatorname{diag}\!\bigl(\nu_{1,s}(w),\dots,\nu_{n-1,s}(w)\bigr).
\]
Since
\[
f(\tilde{\bu})=\frac{\Phi_w(\tilde{\bu})}{r(w)},
\]
each further derivative of \(f\) at the saddle point \(\tilde{\bu}_s(w)\) lowers the order by one power of \(r(w)^{-1}\). In particular,
\[
D^2f(\tilde{\bu}_s(w))=O\!\left(r(w)^{-1}\right),
\qquad
D^3f(\tilde{\bu}_s(w))=O\!\left(r(w)^{-2}\right),
\qquad
D^4f(\tilde{\bu}_s(w))=O\!\left(r(w)^{-3}\right).
\]
Hence, differentiating \eqref{eq:hsw_normal_form} at \(y=0\), we obtain
\[
Dh_{s,w}(0)=O(1),\qquad
D^2h_{s,w}(0)=O\!\left(r(w)^{-1}\right),\qquad
D^3h_{s,w}(0)=O\!\left(r(w)^{-2}\right).
\]

Applying the chain rule and product rule to
\[
G_{s,w}(y)=g(h_{s,w}(y))\,\det h'_{s,w}(y),
\]
we see that every second \(y\)-derivative of \(G_{s,w}\) at \(0\) gains at least one extra
factor \(r(w)^{-1}\) relative to \(G_{s,w}(0)\). Hence
\[
\frac{D^\alpha G_{s,w}(0)}{G_{s,w}(0)}=O\!\left(r(w)^{-2}\right),
\qquad |\alpha|=2.
\]
Therefore
\[
\kappa_s(w)=O\!\left(r(w)^{-2}\right),
\]
and so
\[
\frac{\kappa_s(w)}{r(w)}=O\!\left(r(w)^{-3}\right).
\]
Consequently,
\[
\int_{D_s}\exp\!\bigl(-\Phi_w(\tilde{\bu})\bigr)\,\frac{d\tilde{\bu}}{|u_1\cdots u_{n-1}|}
=
A_s(w)e^{-S_s(w)}\bigl(1+O\!\left(r(w)^{-2}\right)\bigr),
\qquad w\to\infty.
\]

\paragraph{Step 3: one--sided Laplace in $w$ at $w=x$.}
Since $(u_{1,s}(w),\dots,u_{n-1,s}(w))$ minimizes $\Phi_w$ at fixed $w$, and
\[
S_s'(w)=\frac{\partial}{\partial w}\Phi_w(u_{1,s}(w),\dots,u_{n-1,s}(w)).
\]
Only $u_n=w/(u_1\cdots u_{n-1})$ depends on $w$, hence
\[
S_s'(w)
=
\frac{\partial\Psi}{\partial u_n}(u_{1,s}(w),\dots,u_{n,s}(w))\cdot
\frac{\partial u_n}{\partial w}\Bigg|_{\text{min}}
=
\frac{u_{n,s}(w)-\mu_n}{\sigma_n^2}\cdot\frac{1}{u_{1,s}(w)\cdots u_{n-1,s}(w)}.
\]
Using $u_{1,s}(w)\cdots u_{n,s}(w)=w$ this becomes
\[
S_s'(w)=\frac{u_{n,s}(w)\big(u_{n,s}(w)-\mu_n\big)}{\sigma_n^2\,w}
=\frac{1}{\big(\prod_{j=1}^n\sigma_j\big)\,r(w)^{\,n-2}}\Big(
1-\frac{L_s}{n\,r(w)}+O\bigl(r(w)^{-2}\bigr)\Big).
\]

Thus
\[
\int_x^\infty\!\left(\int_{D_s}\exp\!\big(-\Phi_w(\tilde{\bu})\big)\frac{d\tilde{\bu}}{|u_1\cdots u_{n-1}|}\right)\!dw
=
\int_x^\infty\!A_s(w)e^{-S_s(w)}\,dw\ \Big(1+O(r(x)^{-2})\Big),
\]
since $r(w)\ge r(x)$ for $w\ge x$. Applying Corollary~\ref{cor:wong_mu1_alpha1_rate} to $A_s,S_s$ yields
\[
\int_x^\infty A_s(w)e^{-S_s(w)}\,dw
=
\frac{A_s(x)}{S_s'(x)}e^{-S_s(x)}
\left(
1+\frac{\eta_s(x)}{r(x)^2}+O(r(x)^{-4})
\right),
\]
where
\[
\eta_s(x)
=
r(x)^2\left(
\frac{A_s'(x)}{A_s(x)S_s'(x)}
-
\frac{S_s''(x)}{(S_s'(x))^2}
\right),
\qquad
r(x)=x^{1/n}\Bigl(\prod_{j=1}^n\sigma_j\Bigr)^{-1/n}.
\]
Moreover,
\[
A_s(x)=O\!\left(r(x)^{-(n-1)}\right),\qquad
S_s'(x)=O\!\left(r(x)^{-(n-2)}\right),
\]
so
\[
A_s'(x)=O\!\left(r(x)^{-2n+1}\right),\qquad
S_s''(x)=O\!\left(r(x)^{-2n+2}\right).
\]
Therefore
\[
\eta_s(x)=O(1),
\]
and hence
\[
\frac{\eta_s(x)}{r(x)^2}=O\!\left(r(x)^{-2}\right)=O\!\left(x^{-2/n}\right).
\]
In particular,
\begin{equation}
\int_x^\infty A_s(w)e^{-S_s(w)}\,dw
=
\frac{A_s(x)}{S_s'(x)}\,e^{-S_s(x)}\Big(1+O(x^{-2/n})\Big),
\label{eq:w_boundary_laplace_n}
\end{equation}
with $A_s(w)$ given by the Step~2 prefactor.

\paragraph{Stop point.}
Combining \eqref{eq:Fn_uw}, \eqref{eq:tangent_laplace_n}, and \eqref{eq:w_boundary_laplace_n} yields
\[
\bF_n(x)
=
\frac{C}{(2\pi)^{n/2}\prod_{i=1}^n \sigma_i}
\sum_{\substack{s\in\{\pm1\}^n\\ \prod s_i=+1}}
\frac{A_s(x)}{S_s'(x)}\,e^{-S_s(x)}\Big(1+O(x^{-2/n})\Big),
\]
with $S_s(x)=\Psi(\bu^{(s)}(x))$ and the dominant exponent already given in \eqref{eq:Psi_saddle_n_recall}.

\section{Proof of Theorem~\ref{thm:munonzero}}

For \(s\in\mathcal S\), recall
\[
L_s:=\sum_{i=1}^n s_i\frac{\mu_i}{\sigma_i}.
\]
Define
\begin{equation}
L_*:=\max_{s\in\mathcal S} L_s,
\qquad
\mathcal S_*:=\{s\in\mathcal S:\ L_s=L_*\},
\qquad
m_*:=|\mathcal S_*|.
\label{eq:Lstar_def}
\end{equation}

\paragraph{Ratio test.}
Pick \(s^*\in\mathcal S_*\). From \eqref{eq:exp_saddle_n_recall},
\[
\frac{\exp\!\big(-\Psi(\bu^{(s)}(x))\big)}{\exp\!\big(-\Psi(\bu^{(s^*)}(x))\big)}
=
\exp\!\Big(r(x)(L_s-L_*)+O(1)\Big).
\]
If $s\notin\mathcal{S}_*$, then $L_s-L_*<0$, hence the ratio decays exponentially as $x\to\infty$. Therefore
\begin{equation}
\sum_{s\in\mathcal S}\exp\!\big(-\Psi(\bu^{(s)}(x))\big)
=
\sum_{s\in\mathcal S_*}\exp\!\big(-\Psi(\bu^{(s)}(x))\big)\,\bigl(1+o(1)\bigr).
\label{eq:sum_reduce_to_maximizers}
\end{equation}

\paragraph{Sum over maximizers.}
\begin{equation}
\begin{aligned}
\exp\!\big(-\Psi(\bu^{(s)}(x))\big)
={}&
\exp\!\Bigg\{
-\frac{n}{2}\,r(x)^2+r(x)\,L_s
+\frac{1}{4}\left(
\sum_{i=1}^n\left(\frac{\mu_i}{\sigma_i}\right)^2
-\frac{1}{n}L_s^2
\right) \\
&\quad
+\frac{1}{16\,r(x)}
\left(
\sum_{i=1}^n s_i\left(\frac{\mu_i}{\sigma_i}\right)^3
+\frac{L_s}{n}\sum_{i=1}^n\left(\frac{\mu_i}{\sigma_i}\right)^2
-\frac{2}{n^2}L_s^3
\right)
\Bigg\}
\Big(1+O(r(x)^{-2})\Big).
\end{aligned}
\end{equation}
Hence
\begin{equation}
\begin{aligned}
\sum_{s\in\mathcal S}\exp\!\big(-\Psi(\bu^{(s)}(x))\big)
={}&
m_*\,
\exp\!\Bigg\{
-\frac{n}{2}\,r(x)^2+r(x)\,L_s
+\frac{1}{4}\left(
\sum_{i=1}^n\left(\frac{\mu_i}{\sigma_i}\right)^2
-\frac{1}{n}L_s^2
\right) \\
&\quad
+\frac{1}{16\,r(x)}
\left(
\sum_{i=1}^n s_i\left(\frac{\mu_i}{\sigma_i}\right)^3
+\frac{L_s}{n}\sum_{i=1}^n\left(\frac{\mu_i}{\sigma_i}\right)^2
-\frac{2}{n^2}L_s^3
\right)
\Bigg\}
\Big(1+O(r(x)^{-2})\Big).
\label{eq:sum_approx_by_mstar}
\end{aligned}
\end{equation}

\paragraph{Finally.}
From Step~3,
\[
\int_x^\infty\!\left(\int_{D_s}\exp\!\bigl(-\Phi_w(\tilde{\bu})\bigr)\frac{d\tilde{\bu}}{|u_1\cdots u_{n-1}|}\right)\!dw
=
\frac{A_s(x)}{S_s'(x)}\,e^{-S_s(x)}\bigl(1+O(x^{-2/n})\bigr).
\]
Also,
\[
\frac{A_s(x)}{S_s'(x)}
=
\frac{\pi^{(n-1)/2}\prod_{j=1}^n\sigma_j}{\sqrt n}
\left(\frac{\prod_{j=1}^n\sigma_j}{x}\right)^{1/n}
\left(
1+\frac{n+3}{4n}\,L_s
\left(\frac{\prod_{j=1}^n \sigma_j}{x}\right)^{1/n}
+O(x^{-2/n})
\right).
\]
Therefore, for any $s\in\mathcal S_*$, we have

\begin{equation}
\begin{aligned}
\bF_n(x)
=
\frac{C}{2^{n/2}\sqrt{\pi n}}
\left(\frac{\prod_{j=1}^n \sigma_j}{x}\right)^{1/n}
m_*\,
\exp\!\Bigg\{
-\frac{n}{2}\,\left(\frac{x}{\prod_{j=1}^n \sigma_j}\right)^{2/n}+L_*\left(\frac{x}{\prod_{j=1}^n \sigma_j}\right)^{1/n}
+\frac{1}{4}\left(
\sum_{i=1}^n\left(\frac{\mu_i}{\sigma_i}\right)^2
-\frac{1}{n}L_*^2
\right) \\
+\frac{1}{16}\left(\frac{\prod_{j=1}^n \sigma_j}{x}\right)^{1/n}
\left(
\sum_{i=1}^n s_i\left(\frac{\mu_i}{\sigma_i}\right)^3
+\frac{L_*}{n}\sum_{i=1}^n\left(\frac{\mu_i}{\sigma_i}\right)^2
-\frac{2}{n^2}L_*^3
\right)
\Bigg\}
\left(
1+\frac{n+3}{4n}\,L_*
\left(\frac{\prod_{j=1}^n \sigma_j}{x}\right)^{1/n}
+O(x^{-2/n})
\right).
\end{aligned}
\end{equation}

\paragraph{How to compute \(L_*\) and \(m_*\).}
See Remark~\ref{rem:Lstar}.

\noindent This proves Theorem~\ref{thm:munonzero}.

\section{Illustrations}

We compare the asymptotic approximations from Theorem~\ref{thm:munonzero}
with Monte Carlo estimates of $\mathbb{P}(X_1\cdots X_n>x)$ for the following
three parameter choices:
\[
n=4,\qquad
(\mu_1,\mu_2,\mu_3,\mu_4)=(1.0,\,0.7,\,-0.4,\,1.3),\qquad
(\sigma_1,\sigma_2,\sigma_3,\sigma_4)=(1.0,\,1.2,\,1.5,\,0.9),
\]
\[
n=4,\qquad
(\mu_1,\mu_2,\mu_3,\mu_4)=(1.0,\,0,\,-1.0,\,0),\qquad
(\sigma_1,\sigma_2,\sigma_3,\sigma_4)=(0.8,\,1.2,\,0.3,\,0.9),
\]
and
\[
n=5,\qquad
(\mu_1,\mu_2,\mu_3,\mu_4,\mu_5)=(0.6,\,1.3,\,1.5,\,0.2,\,0.4),\qquad
(\sigma_1,\sigma_2,\sigma_3,\sigma_4,\sigma_5)=(0.8,\,0.7,\,0.9,\,0.6,\,0.8).
\]

\begin{figure}[H]
    \centering
    \includegraphics[width=0.75\textwidth]{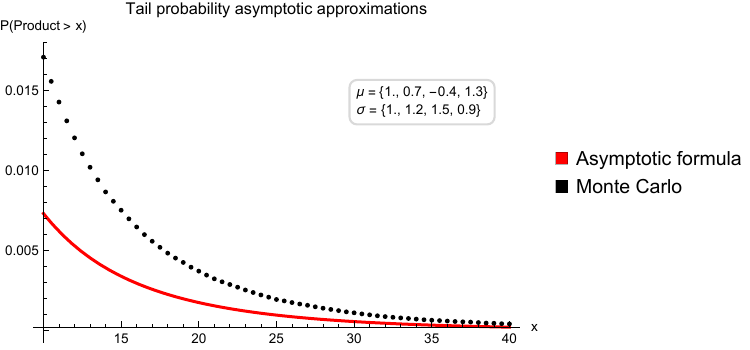}
    \caption{Monte Carlo estimates and asymptotic approximations for
    $n=4$, $(\mu_1,\mu_2,\mu_3,\mu_4)=(1.0,0.7,-0.4,1.3)$, and
    $(\sigma_1,\sigma_2,\sigma_3,\sigma_4)=(1.0,1.2,1.5,0.9)$.}
    \label{fig:illustration1}
\end{figure}

\begin{figure}[H]
    \centering
    \includegraphics[width=0.75\textwidth]{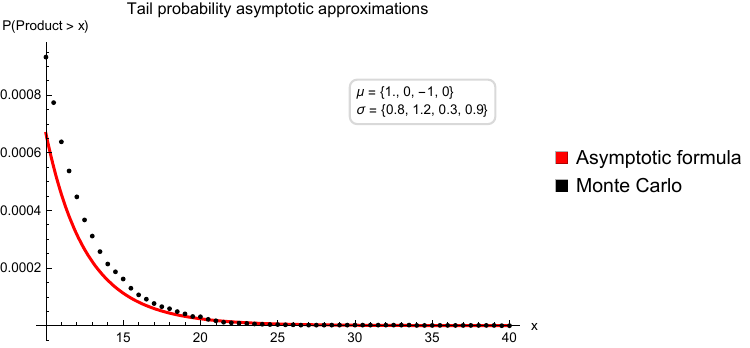}
    \caption{Monte Carlo estimates and asymptotic approximations for
    $n=4$, $(\mu_1,\mu_2,\mu_3,\mu_4)=(1.0,0,-1.0,0)$, and
    $(\sigma_1,\sigma_2,\sigma_3,\sigma_4)=(0.8,1.2,0.3,0.9)$.}
    \label{fig:illustration2}
\end{figure}

\begin{figure}[H]
    \centering
    \includegraphics[width=0.75\textwidth]{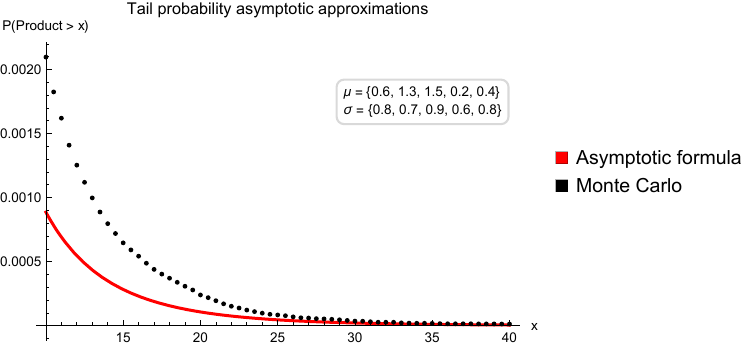}
    \caption{Monte Carlo estimates and asymptotic approximations for
    $n=5$, $(\mu_1,\mu_2,\mu_3,\mu_4,\mu_5)=(0.6,1.3,1.5,0.2,0.4)$, and
    $(\sigma_1,\sigma_2,\sigma_3,\sigma_4,\sigma_5)=(0.8,0.7,0.9,0.6,0.8)$.}
    \label{fig:illustration3}
\end{figure}

\bibliographystyle{plain}
\bibliography{bibliography}

\end{document}